\newtheorem{theorem}{Theorem}[section] 
\newtheorem{lemma}[theorem]{Lemma}
\newtheorem{corollary}[theorem]{Corollary}
\newtheorem{proposition}[theorem]{Proposition}
\theoremstyle{definition}
\newtheorem{definition}[theorem]{Definition}
\newtheorem{remark}[theorem]{Remark}
\newtheorem{example}[theorem]{Example}
\numberwithin{equation}{section}
\def \ff{f}
\def \pN{\mathcal{N}}
\def \NN{\mathbb{N}}
\def \sp{p^*}
\def \tp{\tilde{p}}
\def \RR{\mathbb{R}}
\def \Rd{{\RR^d}}
\def \chiI{{\bf 1}}
\newcommand{\new}[1]{{\textcolor{black}{#1}}}
\newcommand{\newM}[1]{{\textcolor{black}{#1}}}
\begin{document}

\baselineskip=17pt

\title[Gaussian estimates for Schr{\"o}dinger perturbations]{Gaussian estimates for Schr{\"o}dinger perturbations}

\author[K. Bogdan]{Krzysztof Bogdan}
\address{Krzysztof Bogdan\\Institute of Mathematics of Polish Academy of Sciences\\
Institute of Mathematics and Computer Science, Wroc{\l}aw University of Technology\\
Poland}
\email{bogdan@pwr.wroc.pl}

\author[K. Szczypkowski]{Karol Szczypkowski}
\address{Karol Szczypkowski\\Institute of Mathematics and Computer Science\\ Wroc{\l}aw University of Technology, Wyb. Wyspia\'nskiego 27, 50-370 Wroc{\l}aw,
   Poland}
\email{karol.szczypkowski@pwr.wroc.pl}

\date{}

\begin{abstract}
We propose a new general method of estimating Schr{\"o}dinger perturbations of transition densities using an auxiliary transition density as a majorant of the perturbation series. We present applications to Gaussian bounds by proving
an optimal inequality involving four Gaussian kernels, which we 
call 4G Theorem. The applications come with 
honest control of constants in estimates of Schr\"odinger perturbations of Gaussian-type heat kernels and also allow for specific non-Kato perturbations.
\end{abstract}

\subjclass[2010]{Primary 47D06, 47D08; Secondary 35A08, 35B25}

\keywords{Schr\"odinger perturbations, perturbation series, transition density, Gaussian kernel}

\thanks{The second author was supported by NCN (grant no. 2011/03/N/ST1/00607) and MNiSW (grant no. IP2012 018472).}

\maketitle

\section{Introduction and main results}
A Schr\"odinger perturbation is an addition of an operator of multiplication to a given operator. On the level of inverse operators, the addition results in 
\new{a} resolvent or Duhamel's or
\new{a} perturbation formula, and under certain conditions it yields von Neumann or perturbation series for the inverse of the perturbation. The subject is very wide, and we intend to 
touch 
it in the case when the inverse operator is an evolution semigroup, in fact, a transition density. In this case a convenient and simple 
setup is that of integral operators on space-time, and the perturbation series has an exponential flavor due to repeated integrations on time simplexes.
In this work we propose a general method for pointwise estimates of the series, and we demonstrate its versatility by estimating transition densities of Schr\"odinger perturbations of heat kernels on $\Rd$.

In an earlier work, Bogdan, Jakubowski and Sydor \cite{MR3000465}
developed a technique {for {\it sharp} pointwise estimates of} Schr\"odinger perturbations $\tp$ of transition densities $p$ 
and more general integral kernels on
\new{a} state space $X$ by functions $q\geq 0$. The method rests on the assumption 
\begin{equation}\label{eq:kk_1}
\int_s^t\int_X p(s,x,u,z)q(u,z)p(u,z,t,y)dzdu\leq [\eta +Q(s,t)]p(s,x,t,y),
\end{equation} 
where $s<t$, $x,y \in X$, $0\le \eta<\infty$, and $0\le Q(s,u)+Q(u,t)\le Q(s,t)$ and $dz$ is a measure on $X$.
The left-hand side of \eqref{eq:kk_1} defines the term $p_1(s,x,t,y)$ in the perturbation series,
\begin{equation}\label{def:tp}
\tp(s,x,t,y)=\sum_{n=0}^{\infty}p_n(s,x,t,y)\,,
\end{equation}
(see Section~\ref{sec:Gen_est} for detailed definitions),
and so $p_1(s,x,t,y)/p(s,x,t,y)\le \eta+Q(s,t)$. 
The bound is u\-ni\-form in space and locally uniform in time,
and it propagates as follows, 
\begin{align}\nonumber
p_n(s,x,t,y)&\leq p_{n-1}(s,x,t,y)\left[\eta + \frac{Q(s,t)}{n}\right]\\
\label{eq:gpte2}
&\leq p (s,x,t,y)\prod_{k=1}^{n}\left[\eta + \frac{Q(s,t)}{k}\right].
\end{align}
Furthermore, if $0<\eta <1$, then \eqref{def:tp} and \eqref{eq:gpte2} yield
\begin{equation}\label{eq:metgKpt}
\tilde{p} (s,x,t,y)\leq p (s,x,t,y){\left(\frac{1}{1-\eta}\right)}^{1+Q(s,t)/\eta},
\end{equation}
and if $\eta =0$, then 
\begin{equation}\label{eq:gpte4}
\tilde{p}(s,x,t,y)\leq p (s,x,t,y)e^{Q(s,t)}.
\end{equation}
The above estimates are sharp\new{,} i.e.\new{,} the ratio of the upper bound and (the trivial lower bound) $p$ is bounded locally in time. In fact, as shown by Bogdan, Hansen and Jakubowski \cite[Example 4.3 and 4.5]{2011-BHJ} the exponential factors in \eqref{eq:metgKpt} or \eqref{eq:gpte4}
are very nearly optimal.
The estimates also apply to 
rather general integral kernels on space-time, without assuming Chapman-Kolmogorov equations.
It is now crucial 
to verify 
\eqref{eq:kk_1} for given $p$ and $q$. To this end
we usually try to split and estimate the singularities 
of the integrand $(u,z)\mapsto p(s,x,u,z)q(u,z)p(u,z,t,y)$ in \eqref{eq:kk_1}.
This is 
straightforward
if $q$ satisfies a suitable Kato-type condition and 
\new{the} 3G Theorem holds for $p$ (see  \cite[Remark~2]{MR3000465}, \eqref{3P:ineq} and the discussion in Section~\ref{sec:Gen_est} and Section~\ref{sec:Gaus_tr} below). 
The latter is the case, e.g.,  for the transition density of the fractional Laplacian as described by Bogdan, Hansen and Jakubowski \cite[Corollary 11]{MR2457489}, \cite[Example~4.13]{2011-BHJ},
and for the potential kernel of the stable subordinator \cite[Example~2]{MR3000465}, corresponding to the fact that the functions have power-type asymptotics.
However, due to their exponential decay, 3G fails (see \eqref{no3G})  for the Gaussian kernels
\begin{align}\label{def:g_c}
g_a (s,x,t,y)= [4\pi (t-s)/a]^{-d/2} \exp \big\{-|y-x|^2/[4(t-s)/a]\big\}.
\end{align}
Here {$d\in \NN$}, $a>0$, $s<t$, $x,y\in \Rd$, and we let $g_a (s,x,t,y)=0$ if $s\ge t$.
We observe that for $0<a<b<\infty$ we have
\begin{align}
g_b(s,x,t,y)&\leq (b/a)^{d/2}g_a (s,x,t,y)\,. \label{ineq:gbga}
\end{align}
Motivated by these observations, the results of Zhang \cite{MR1488344,MR1978999} and the arguments of  Jakubowski and Szczypkowski
\cite{MR2643799,MR2876511}, we propose 
a  more flexible method of estimating Schr\"odinger perturbations of {\it transition densities} on $X$ with respect to a measure $dz$.
The method employs an auxiliary transition density $p^*$
as an approximate {\it majorant}
of $p$ substituting for $p(\cdot,\cdot,t,y)$ in \eqref{eq:kk_1}. Namely, we assume that two measurable transition densities satisfy
\begin{equation}\label{eq:pCtp}
p(s,x,t,y) \leq C \sp (s,x,t,y)\,,
\end{equation}
with a constant $C\ge 1$. In addition to superadditivity of $Q(s,t)\ge 0$, we also assume that it is right-continuous  in $s$ and left-continuous in $t$, and that the following inequality holds
\begin{equation}\label{def:coeNs}
\int_s^t \int_{X} p(s,x,u,z)q(u,z) \sp(u,z,t,y)\,dz\,du \leq \big[ \eta + Q(s,t)\big] \sp(s,x,t,y)\,.
\end{equation}
We write these conditions in short as $q \in \pN(p,\sp,C,\eta,Q)$ (see Definition~\ref{def:N} below for details). They allow
to recursively estimate multiple integrals involving $p$ in the perturbation series.

In Section~\ref{sec:Gen_est} we prove our first main result, which is as follows.
\begin{theorem}\label{thm:1}
If $q\in \pN(p,\sp,C,\eta,Q)$ and $0\le \eta< 1$, then
for all $s<t$, $x,y \in X$ and $0<\varepsilon<1-\eta$ we have
\begin{equation}\label{ineq:thm1a}
\tp (s,x,t,y)\leq \sp (s,x,t,y)\left( \frac{C}{1-\eta-\varepsilon}\right)^{1+\frac{Q(s,t)}{\varepsilon}}\,.
\end{equation}
\end{theorem}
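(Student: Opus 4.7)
The plan is to prove a local version of the bound first, valid when $\eta+Q(s,t)$ is small, and then pass to the general case by partitioning $[s,t]$ and exploiting Chapman--Kolmogorov.

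First, by induction on $n$, I would establish
\begin{equation*}
p_n(s,x,t,y) \le C\bigl[\eta+Q(s,t)\bigr]^n \sp(s,x,t,y), \qquad n \ge 0.
\end{equation*}
The case $n=0$ is \eqref{eq:pCtp}. For the inductive step, write $p_n(s,x,t,y) = \int_s^t\int_X p(s,x,u,z)\,q(u,z)\,p_{n-1}(u,z,t,y)\,dz\,du$, apply the inductive hypothesis to $p_{n-1}(u,z,t,y)$, use $Q(u,t)\le Q(s,t)$ (which follows from superadditivity together with $Q\ge 0$) to pull $C[\eta+Q(s,t)]^{n-1}$ out of the integral, and bound the remaining integral by $[\eta+Q(s,t)]\sp(s,x,t,y)$ via \eqref{def:coeNs}. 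Summing a geometric series, whenever $\eta+Q(s,t)<1$,
\begin{equation*}
\tp(s,x,t,y)\le\frac{C}{1-\eta-Q(s,t)}\,\sp(s,x,t,y).
\end{equation*}

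For arbitrary $s<t$, fix $\varepsilon\in(0,1-\eta)$ and partition $[s,t]$ into $s=t_0<t_1<\cdots<t_N=t$ with $Q(t_{i-1},t_i)\le\varepsilon$ for every $i$ and $N\le 1+Q(s,t)/\varepsilon$. The greedy choice $t_i=\sup\{u\in[t_{i-1},t]:Q(t_{i-1},u)\le\varepsilon\}$ is legitimate by the left-continuity of $Q$ in its second argument; the count $N$ is controlled by superadditivity together with right-continuity of $Q$ in its first argument.

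Finally, Chapman--Kolmogorov for $\tp$ --- which follows from the analogous property of $p$ by applying Fubini to \eqref{def:tp}, once finiteness is ensured by the local bound above --- yields, upon iteration along the partition,
\begin{equation*}
\tp(s,x,t,y)=\int\!\cdots\!\int\prod_{i=1}^N\tp(t_{i-1},z_{i-1},t_i,z_i)\,dz_1\cdots dz_{N-1},
\end{equation*}
with $z_0=x$ and $z_N=y$. On each piece the local bound gives $\tp(t_{i-1},z_{i-1},t_i,z_i)\le \frac{C}{1-\eta-\varepsilon}\sp(t_{i-1},z_{i-1},t_i,z_i)$; the $\sp$-factors collapse to $\sp(s,x,t,y)$ by Chapman--Kolmogorov for $\sp$, and since $C/(1-\eta-\varepsilon)\ge 1$ and $N\le 1+Q(s,t)/\varepsilon$, the inequality \eqref{ineq:thm1a} follows. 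The main obstacle is the partition lemma: establishing $N\le 1+Q(s,t)/\varepsilon$ requires careful use of the one-sided continuity of $Q$ (which may have jumps) combined with superadditivity to guarantee that the greedy partition covers $[s,t]$ in the claimed number of steps.
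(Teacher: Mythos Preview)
Your argument is correct but takes a different route from the paper. The paper first partitions $[s,t]$ into $k$ pieces via Lemma~\ref{lem:4}, then uses the identity \eqref{eq:pCK} and induction on $k$ to obtain the termwise bound $p_n\le\binom{n+k-1}{k-1}\theta^nC^k\sp$ with $\theta=\eta+Q(s,t)/k$ (Lemma~\ref{lem:2}), and finally sums the negative binomial series $\sum_n\binom{n+k-1}{k-1}\theta^n=(1-\theta)^{-k}$. You instead sum first---your inductive bound is precisely the $k=1$ case of Lemma~\ref{lem:2}---obtaining the local estimate $\tp\le\frac{C}{1-\eta-Q(s,t)}\sp$, then partition, and patch via Chapman--Kolmogorov for $\tp$. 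Your route is more elementary in that it avoids the combinatorial Lemma~\ref{lem:2}; the price is invoking Chapman--Kolmogorov for $\tp$, which the paper records separately (it is \eqref{eq:pCK} summed over $n$, and holds as an identity in $[0,\infty]$ by Tonelli, so no a~priori finiteness of $\tp$ is needed).

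Regarding the partition lemma you flag as the main obstacle: your greedy construction differs from the paper's Lemma~\ref{lem:4} (which uses level sets $s_i=\inf\{u:Q(s,u)\ge i\theta\}$), but it does deliver $N\le 1+Q(s,t)/\varepsilon$. The key step for the count is that for $1\le i\le N-1$ one has $Q(t_i,t)\le Q(t_{i-1},t)-\varepsilon$: indeed, for $v\in(t_i,t)$ the greedy gives $Q(t_{i-1},v)>\varepsilon$, so superadditivity yields $Q(v,t)\le Q(t_{i-1},t)-Q(t_{i-1},v)<Q(t_{i-1},t)-\varepsilon$, and letting $v\downarrow t_i$ via right-continuity in the first variable gives the claim. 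Telescoping then yields $0\le Q(t_{N-1},t)\le Q(s,t)-(N-1)\varepsilon$. The same combination (superadditivity plus right-continuity in the first variable) also guarantees $\lim_{u\downarrow t_{i-1}}Q(t_{i-1},u)=0$, so the greedy never stalls.
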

\begin{remark}
Two natural 
choices 
are: $\varepsilon=\eta$ if $0<\eta<\frac12$, and $\varepsilon=\frac{1-\eta}{2}$. 
\end{remark}
In the second part of the paper we test our methods
against Gaussian-type estimates.
To this end we first elaborate 
\cite[inequality (4.4)]{MR1488344}
by giving the best constant in the following estimate 
involving four different Gaussian kernels (hence 4G). 
\begin{theorem}[4G]\label{thm:4P}
\new{For $\alpha>0$, let}
$L(\alpha) = \max\limits_{\tau\geq \alpha\vee 1/\alpha } \left[  \ln\left(1+\tau\right)  -\frac{\tau-\alpha}{1+\tau} \ln (\alpha \tau )\right]$, \new{and let} $0<a<b<\infty$ and
$M=\left(\frac{b}{ b-a}\right)^{d/2}\exp\left[\frac{d}{2}L(\frac{a}{b-a})\right]$.
Then 
\begin{equation}\label{eq:4G}
\!\!\!\!g_b(s,x,u,z)g_a(u,z,t,y)\!\leq\! M\!\left[g_{b-a}(s,x,u,z)\vee g_a(u,z,t,y)\right]g_a(s,x,t,y),
\end{equation}
for all $s<u<t$ and $x,z,y\in \Rd$.
This fails for some $s<u<t$, $x,z,y\in \Rd$, if $M<\left(\frac{b}{b-a}\right)^{d/2}\exp\left[\frac{d}{2}L(\frac{a}{b-a})\right]$.
\new{Further,} we have
$\left(\frac{b}{ b-a}\right)^{d/2}\exp\left[\frac{d}{2}L(\frac{a}{b-a})\right]=\left( 1-a/b \right)^{-d}$
if 
$
1/(1+e^{-1/2})\le a/b<1$.
\end{theorem}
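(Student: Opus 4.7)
The plan is to recast the 4G inequality as a saddle-point problem and reduce it, by completion of squares in the Gaussian exponents, to the one-variable optimization defining $L(\alpha)$. Setting
\[
A=\frac{g_b(s,x,u,z)}{g_a(s,x,t,y)},\qquad B=\frac{g_b(s,x,u,z)\,g_a(u,z,t,y)}{g_{b-a}(s,x,u,z)\,g_a(s,x,t,y)},
\]
and distinguishing whether $g_a(u,z,t,y)\ge g_{b-a}(s,x,u,z)$ or not, one checks that \eqref{eq:4G} is pointwise equivalent to $\min(A,B)\le M$. The identity $g_b/g_{b-a}=(b/(b-a))^{d/2}\exp(-a|z-x|^2/(4(u-s)))$ and the Gaussian completion of squares
\[
\frac{|z-x|^2}{u-s}+\frac{|y-z|^2}{t-u}=\frac{|y-x|^2}{t-s}+\frac{(t-s)\,|z-w|^2}{(u-s)(t-u)},\qquad w=\frac{(t-u)x+(u-s)y}{t-s},
\]
give closed forms in which $B$ depends on $z$ only through $|z-w|$.

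Next I would reduce to a one-dimensional optimization. Translation and rotation let us take $x=0$ and $y$ on a coordinate axis, and since $\ln A$ and $\ln B$ are both strictly decreasing in the components of $z$ orthogonal to $y$, the supremum is attained with $z$ collinear with $x,y$; a monotonicity check in the remaining coordinate confines it to the segment from $x$ to $w$. After the scalings $t-s=1$, $|y-x|=\sqrt{d}\,R$, $z_1=\sqrt{d}\,U$ and the substitutions $\tau=(u-s)/(t-u)$, $q=U-R\tau/(1+\tau)$, $\alpha=a/(b-a)$, the rescaled logarithms $\tilde A:=(2/d)\ln A$ and $\tilde B:=(2/d)\ln B$ become explicit quadratic functions of $(q,R)$, with $\tilde B$ independent of $R$. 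The theorem then reduces to the claim $\sup_{\tau,U,R}\min(\tilde A,\tilde B)=\ln(1+\alpha)+L(\alpha)$.

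The crucial step is a Lagrange analysis on the level set $\{\tilde A=\tilde B\}$. Because $\partial_R\tilde B\equiv 0$, parallelism of $\nabla\tilde A$ and $\nabla\tilde B$ forces $\partial_R\tilde A=0$, which solves for $R$ as an explicit linear function of $q$; substituting back makes $\tilde A$ a quadratic in $q^2$ whose coefficient has the sign of $\tau-\alpha$. Imposing $\tilde A=\tilde B$ then yields
\[
q^{*2}=\frac{2\tau(\tau-\alpha)\ln(\alpha\tau)}{a(1+\tau)^{3}},
\]
whose nonnegativity encodes exactly the admissibility constraint $\tau\ge\alpha\vee 1/\alpha$. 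A short telescoping calculation, using $b/(b-a)=1+\alpha$, then produces
\[
\min(\tilde A,\tilde B)\Big|_{\mathrm{saddle}}=\ln(1+\alpha)+\ln(1+\tau)-\frac{\tau-\alpha}{1+\tau}\,\ln(\alpha\tau),
\]
which is precisely $\ln(1+\alpha)+L(\alpha)$ after maximizing over admissible $\tau$. Boundary configurations ($z\in\{x,w\}$ or $r=0$), relevant in the complementary range $\tau<\alpha\vee 1/\alpha$, yield strictly smaller values, so the interior saddle controls the supremum, and explicit configurations saturating the saddle at each admissible $\tau$ show sharpness. The special range $1/(1+e^{-1/2})\le a/b<1$, i.e.\ $\alpha\ge e^{1/2}$, is then a one-variable calculus check: at $\tau=\alpha$ the saddle expression equals $\ln(1+\alpha)$ and its derivative $(1-2\ln\alpha)/(1+\alpha)$ is nonpositive, so the maximum on $[\alpha,\infty)$ is attained at the left endpoint, giving $L(\alpha)=\ln(1+\alpha)$ and $M=(1+\alpha)^d=(1-a/b)^{-d}$.

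The main obstacle is the algebra of the saddle computation, and in particular identifying the parametrization $\tau=(u-s)/(t-u)$ (rather than its reciprocal) that makes the final expression telescope into the exact form defining $L(\alpha)$. A secondary issue is verifying that when the interior saddle is infeasible the boundary optima are dominated by the interior value at $\tau=\alpha\vee 1/\alpha$; this reduces to routine monotonicity in $\tau$ of a handful of elementary expressions.
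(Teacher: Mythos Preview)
Your approach is correct and lands on the same critical expression as the paper, but the route differs in a way worth noting. The paper does not use the completion-of-squares identity or the barycenter $w$; instead it takes logarithms, invokes the triangle inequality $|y-x|\le|z-x|+|y-z|$ (with equality in the collinear extremal case), and substitutes
\[
\alpha=\frac{a}{b-a},\quad \tau=\frac{u-s}{t-u},\quad \xi=|y-z|\sqrt{\tfrac{a}{t-u}},\quad \eta=|z-x|\sqrt{\tfrac{a}{t-u}}.
\]
This casts \eqref{eq:4G} as the scalar inequality $f(1+\tau,\xi+\eta)\le f(1,\xi)\vee f(\alpha\tau,\eta)+\eta^2/\tau+L(\alpha)$ with $f(\tau,x)=\ln\tau+x^2/\tau$, which is then proved as a standalone lemma by analyzing the constraint curve $f(1,\xi)=f(\alpha\tau,\eta)$ (parametrized by $\eta$, with $\xi'(\eta)=\eta/(\alpha\tau\xi)$) and locating its critical point; the complementary regime $f(\alpha\tau,\eta)>f(1,\xi)$ is dispatched in one line by decreasing $\xi$.

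Your coordinates $(q,R)$, arising from the completion of squares, are genuinely different and arguably cleaner: the independence $\partial_R\tilde B\equiv 0$ makes the Lagrange condition $\partial_R\tilde A=0$ explicit and linear, whereas the paper differentiates $\xi(\eta)$ implicitly. The price is that you must track feasibility constraints (the sign of $a-(b-a)\tau$ when solving for $R$, and the boundary regime $\tau<\alpha\vee 1/\alpha$) more carefully, while the paper's monotonicity reduction absorbs these into its two cases without a separate boundary analysis. Both approaches identify the saddle value $\ln(1+\tau)-\frac{\tau-\alpha}{1+\tau}\ln(\alpha\tau)$ and the admissibility range $\tau\ge\alpha\vee 1/\alpha$, and both obtain the endpoint case $L(\alpha)=\ln(1+\alpha)$ for $\alpha\ge e^{1/2}$ by the same one-variable check.
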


The proof of Theorem~\ref{thm:4P} is given
in Section~\ref{sec:eGk}.

Then, in Section~\ref{sec:Gaus_tr} 
we obtain precise  Gaussian estimates for the fundamental solution of 
Schr\"odinger perturbations of 
second order 
parabolic differential operators, recovering and improving existing results, which we 
discuss there at some length.
They follow by considering $g_b$ and $g_a$ of Theorem~\ref{thm:4P} 
as (multiples of)
$p$ and $p^*$ of Theorem~\ref{thm:1}. 
Most of our discussion in Section~\ref{sec:Gaus_tr} is summarized in the following theorem on Borel measurable transition densities  $p$ on $X=\Rd$ with the Lebesgue measure $dz$. To simplify
\new{the} notation, for $d\ge 3$ and $U\colon\Rd\to\RR$ we denote
\begin{align}\label{def:I}
I_{\delta}(U)= \sup_{x\in\Rd} \int_{|z-x|
< \delta}  \frac{|U(z)|}{|z-x|^{d-2}}\,dz\,, \quad \delta>0\,,
\end{align}
and we let $c_0=c_0(d)=\Gamma(d/2-1)\pi^{-d/2}/4$.
\begin{theorem}\label{thm:new}
Let $d\geq 1$. Assume that 
$b>0$, $\Lambda\ge 1$ and $\lambda\in\RR$ exist such that\new{,} for $s<t$,
\begin{equation}\label{eq:Gub}
p(s,x,t,y)\le \Lambda e^{\lambda(t-s)}g_b(s,x,t,y),\quad x,y\in \Rd\,.
\end{equation}
Let $0<a<b$ and $C=\Lambda(b/a)^{d/2}$.
If $q\in \pN(g_b,g_a,(b/a)^{d/2}, \eta,Q)$, then for all $s<t$, $x,y\in\Rd$ and $0<\varepsilon <1-\Lambda\eta$,
\begin{align}
\label{eq:geKa}
\tp(s,x,t,y) &
\leq 
\left( \frac{C}{1-\Lambda\eta-\varepsilon}\right)^{1+\frac{\Lambda Q(s,t)}{\varepsilon}}e^{\lambda(t-s) }g_a(s,x,t,y)\,.
\end{align}
\new{Further,} if $d\geq 3$, $h>0$, $q$ is time-independent and $I_{\sqrt{h}}(q)<\infty$, then $q\in\pN(g_b,g_a,(b/a)^{d/2},\eta,Q)$ with
\begin{align*}
\eta= b\, c_0 M\, I_{\sqrt{h}}(q)\,,\qquad
Q(s,t)=(t-s) I_{\sqrt{h}}(q)\,2M/( |B(0,1/2)|h)\,.
\end{align*}
\end{theorem}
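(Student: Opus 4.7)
The plan breaks into two parts: (i) reducing \eqref{eq:geKa} to Theorem~\ref{thm:1} by an exponential rescaling of $p$, and (ii) verifying $q\in\pN(g_b,g_a,(b/a)^{d/2},\eta,Q)$ via the 4G Theorem combined with standard near/far estimates tailored to the local Kato norm $I_{\sqrt h}(q)$.

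For (i), set $\hat p(s,x,t,y):=e^{-\lambda(t-s)}p(s,x,t,y)$. By \eqref{eq:Gub} and \eqref{ineq:gbga} one has $\hat p\le\Lambda g_b\le\Lambda(b/a)^{d/2}g_a=Cg_a$. A direct induction on $n$ shows that the perturbation-series iterates transform as $\hat p_n=e^{-\lambda(t-s)}p_n$, since the exponential factor telescopes across the nested time integrals; hence $\tilde{\hat p}=e^{-\lambda(t-s)}\tp$. The hypothesis $q\in\pN(g_b,g_a,C/\Lambda,\eta/\Lambda,Q/\Lambda)$ then translates, upon multiplication of the defining inequalities by $\Lambda$, into $q\in\pN(\hat p,g_a,C,\eta,Q)$. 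Applying Theorem~\ref{thm:1} to $\hat p$ and $g_a$ and restoring the exponential factor gives \eqref{eq:geKa}.

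For (ii), the inequality $g_b\le(b/a)^{d/2}g_a$ is \eqref{ineq:gbga}, so it remains to verify \eqref{def:coeNs}. The 4G Theorem (Theorem~\ref{thm:4P}) together with $\alpha\vee\beta\le\alpha+\beta$ gives
\begin{equation*}
g_b(s,x,u,z)\,g_a(u,z,t,y)\le M\bigl[g_{b-a}(s,x,u,z)+g_a(u,z,t,y)\bigr]g_a(s,x,t,y),
\end{equation*}
so the left-hand side of \eqref{def:coeNs} is bounded by $Mg_a(s,x,t,y)(I_1+I_2)$, where $I_1=\int_s^t\!\int q(z)g_{b-a}(s,x,u,z)\,dz\,du$ and $I_2=\int_s^t\!\int q(z)g_a(u,z,t,y)\,dz\,du$; time reversal $v=s+t-u$ turns $I_2$ into an $I_1$-type integral based at $y$ with parameter $a$. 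Splitting each $I_j$ into the near region $\{|z-x|<\sqrt h\}$ (respectively $\{|z-y|<\sqrt h\}$) and its complement, the identity $\int_s^\infty g_c(s,x,u,z)\,du=c\,c_0(d)|z-x|^{-(d-2)}$ bounds the near part of $I_1$ by $(b-a)c_0(d)I_{\sqrt h}(q)$ and the near part of $I_2$ by $a\,c_0(d)I_{\sqrt h}(q)$, summing to $b\,c_0(d)I_{\sqrt h}(q)$, which after multiplication by $M$ produces $\eta$. The far part of each $I_j$ is handled by a Gaussian averaging argument: for $|z-x|\ge\sqrt h$, the radial monotonicity of $g_c(s,x,u,\cdot)$ around $x$ implies a bound of the form $g_c(s,x,u,z)\le K\,|B(z,\sqrt h/2)|^{-1}\!\int_{B(z,\sqrt h/2)}g_c(s,x,u,w)\,dw$ with a universal constant $K$; swapping the order of integration via Fubini, using $\int_{\Rd}g_c(s,x,u,w)\,dw=1$ and the local Kato bound $\int_{|z-w|<\sqrt h/2}q(z)\,dz\le(\sqrt h/2)^{d-2}I_{\sqrt h}(q)$, together with $|B(0,\sqrt h/2)|=h^{d/2}|B(0,1/2)|$, yields a contribution of order $(t-s)I_{\sqrt h}(q)/(|B(0,1/2)|\,h)$; summing both far parts and including the $M$ prefactor produces $Q(s,t)$. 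Superadditivity and continuity of $Q(s,t)=\mathrm{const}\cdot(t-s)$ in $s,t$ are immediate.

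The principal technical obstacle is the far region: the local Kato norm $I_{\sqrt h}(q)$ controls $q$ only through weighted integrals over balls of radius $\sqrt h$, whereas the Gaussian tail of $g_c(s,x,u,z)$ in $z$ extends globally and $q$ may be unbounded. The local averaging trick converts the pointwise value $g_c(s,x,u,z)$ into an integral over a small ball $B(z,\sqrt h/2)$; Fubini then repositions the base point so that the Kato norm applies at each $w$, while the $(t-s)$ factor emerges from integrating out the time variable against the normalized Gaussian. Tracking constants carefully through the half-ball geometric estimate is what produces the specific form $2M/(|B(0,1/2)|h)$ in $Q$.
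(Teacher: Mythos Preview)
Your treatment of part~(i) is correct and essentially the same as the paper's: you rescale $p$ down by $e^{-\lambda(t-s)}$, whereas the paper rescales the majorant up by setting $\sp(s,x,t,y)=e^{\lambda(t-s)}g_a(s,x,t,y)$; either route verifies $q\in\pN(\,\cdot\,,\,\cdot\,,C,\eta,Q)$ and Theorem~\ref{thm:1} then yields \eqref{eq:geKa}.

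For part~(ii) the strategy---4G followed by a near/far split against $I_{\sqrt h}(q)$---is exactly the paper's, and your near-region bounds are right and sum to $\eta$. The far-region step, however, does not produce the stated $Q$ as written. Averaging $g_c(s,x,u,\cdot)$ over $B(z,\sqrt h/2)$ with a ``universal $K$'' gives at best $K=2^d$ (by fitting a ball of radius $\sqrt h/4$ into $B(z,\sqrt h/2)\cap B(x,|z-x|)$); after Fubini and $\int_{|z-w|<\sqrt h/2}q\le(\sqrt h/2)^{d-2}I_{\sqrt h}(q)$ one gets a far contribution $4(t-s)I_{\sqrt h}(q)/(h|B(0,1/2)|)$ per $I_j$, hence a $Q$ four times the one claimed. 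The fix is to average over the \emph{larger} ball $B(z,\sqrt h)$: for $|z-x|\ge\sqrt h$ the inclusion
\[
B\Bigl(z+\tfrac{\sqrt h}{2}\,\tfrac{x-z}{|x-z|},\,\tfrac{\sqrt h}{2}\Bigr)\subset B(z,\sqrt h)\cap B(x,|z-x|)
\]
gives $g_c(s,x,u,z)\le |B(0,\sqrt h/2)|^{-1}\int_{B(z,\sqrt h)}g_c(s,x,u,w)\,dw$ with no extra constant; combined with $\int_{|z-w|<\sqrt h}q\le h^{(d-2)/2}I_{\sqrt h}(q)$ and $|B(0,\sqrt h/2)|=h^{d/2}|B(0,1/2)|$ this yields exactly $(t-s)I_{\sqrt h}(q)/(h|B(0,1/2)|)$ per far part, and thus the stated $Q$. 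This is precisely Lemma~\ref{lem:1e} with $r=\sqrt h$; the paper packages the whole near/far computation (applied to the time-integrated kernel) as Proposition~\ref{prop:nice}, from which \eqref{explKato} and the second assertion of the theorem follow at once.
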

\noindent
\newM{For more details on the role of the term $I_{\delta}(q)$ see \eqref{eq:psup}.} Here $|B(0,1/2)|$ is the volume of the ball with radius $1/2$,
$M$ is the optimal constant from \eqref{eq:4G},
and the smallness of $\eta$ may follow from having $b$ small and $a$ proportional to $b$ or choosing $h$, hence $I_{\sqrt{h}}(q)$, small. 
This brings about
honest control of constants
in estimates, which is not available by other existing methods. Our bounds 
of $\tp$ are automatically global in time, and we do not need to patch together estimates obtained in small time intervals by means of Chapman-Kolmogorov equations. 
Noteworthy, our methods are not restricted to Gaussian-type kernels.
Further 
applications, e.g. to perturbations of the transition density of the $1/2$-stable subordinator, will be given in a forthcoming paper.

In Section~\ref{sec:Gaus_tr} we also describe connections to second order differential operators and we identify some of the transition densities $\tp$ given by \eqref{def:tp} as left inverses of second order differential operators on space-time: for $s\in \RR$, $x\in\Rd$ and $\phi \in C_c^{\infty}(\RR\times\Rd)$,
\begin{align*}
&\int_s^{\infty}\!\! \int_{\Rd}\!\! \!\!\tp (s,x,u,z) \\
&\!\!\left(\!\! \frac{\partial }{\partial u} + 
\sum_{i,j=1}^n a_{ij}(u,z)\frac{\partial^2}{\partial z_i\partial z_j}+
\sum_{i=1}^n b_i(u,z)\frac{\partial }{\partial z_i}
+ q(u,z)\!\!\right)\!\!\phi(u,z) \,dzdu=-\phi(s,x)\,.
\end{align*}

In Section~\ref{sec:Misc} we give miscellaneous methodological comments on superadditivity of $Q$. 

Our inspiration mainly comes from \cite{MR2876511} and \cite{MR3000465}. Our ideas are also similar to those developed for Gaussian estimates in \cite{MR1488344}.
In particular, the condition \eqref{def:coeNs} for our main Theorem~\ref{thm:1} may be considered as a generalization of the Main Lemma~4.1 of \cite{MR1488344}, 
while the 4G inequality in Theorem~\ref{thm:4P} yields an alternative, synthetic justification of 
that lemma. 
Furthermore, the proof of \cite[inequality (4.4)]{MR1488344}, yields 4G, except for the optimal constant $M$.
It is thus of interest that the approach of \cite{MR1488344}, which was taylor-made for the Gaussian kernel, has a more general context given by Theorem~\ref{thm:1}.

\section{Estimates for general transition densities}\label{sec:Gen_est}

Let $X$ be an arbitrary set with a $\sigma$-algebra $\mathcal{M}$ and a (non-negative) $\sigma$-finite measure $m$ defined on $\mathcal{M}$. To simplify the notation we will write $dz$ for $m(dz)$ in what follows. We also consider the Borel subsets $\mathcal{B}$ of $\RR$, and the Lebesgue measure, $du$, defined on $\RR$. The {\it space-time}, $\RR\times X$, will be equipped with the $\sigma$-algebra $\mathcal{B}\times\mathcal{M}$ and the product measure $du\,dz=du\,m(dz)$.

We will consider a {\it measurable transition density} $p$ on space-time, i.e.\new{,}
we assume that $p: \RR\times X \times\RR\times X\to [0,\infty]$ 
is 
$\mathcal{B}\times\mathcal{M}\times\mathcal{B}\times\mathcal{M}$-measurable
and the Chapman-Kolmogorov equations hold for all $x,y \in X$  and $s<u<t$:
\begin{align}\label{assume1}
\int_{X} p(s,x,u,z)p(u,z,t,y)\,dz = p(s,x,t,y)\,.
\end{align}
Let $q\colon \RR \times X \to [0,\infty]$ 
be (nonnegative and) $\mathcal{B}\times\mathcal{M}$-measurable.
(All the functions considered below are tacitly assumed measurable on their respective domains.)
The Schr{\"o}dinger perturbation $\tilde p$ of $p$ by $q$ is defined by the series \eqref{def:tp},
where $p_{0}(s,x,t,y)=p(s,x,t,y)$,
\begin{equation*}
p_1(s,x,t,y)=\int_s^t\int_X p(s,x,u,z)q(u,z)p(u,z,t,y)dzdu,
\end{equation*}
and for $n=2,3,\ldots$,
\begin{align}
&p_n(s,x,t,y) = \int_s^t \int_{u_1}^t \ldots \int_{u_{n-1}}^t \;\int_{(\Rd)^n}
p(s,x,u_1,z_1)q(u_1,z_1)
 \nonumber
\\
&p(u_1,z_1,u_2,z_2)\cdots q(u_n,z_n)p(u_n,z_n,t,y)dz_n\cdots dz_1 du_n \cdots du_1. \label{defpn}
\end{align}
By Fubini-Tonelli, for $n=1,2,\ldots$, we have
\begin{align}\label{def:p_n}
p_n(s,x,t,y)&=\int_s^t \int_{X} p(s,x,u,z)q(u,z) p_{n-1}(u,z,t,y)dzdu\,,
\end{align} 
and
\begin{align}\label{def:p_n2}
p_n(s,x,t,y)&=\int_s^t \int_{X} p_{n-1}(s,x,u,z)q(u,z) p(u,z,t,y)dzdu\,.
\end{align}
By \cite[Lemma 1]{MR2457489},
for all $s<u<t$, $x,y\in X$ and $n\in \NN_0=\{0,1,\ldots\}$, 
\begin{align}\label{eq:pCK}
\sum_{m=0}^n \int_{X} p_m (s,x,u,z)  p_{n-m} (u,z,t,y)\,dz = p_n (s,x,t,y)\,.
\end{align} 
By \cite[Lemma 2]{MR2457489}, Chapman-Kolmogorov equations hold for $\tp$.
Clearly, $\tp\ge p$.
\begin{remark}\label{sper}
The perturbation,
say 
$p_V$, is given by the same formulae if $V:\RR\times \Rd\to \mathbb{C}$ (takes on complex, in particular negative values), 
provided $p_{|V|}$ is finite. Indeed, $p_V$ then converges absolutely and
\begin{equation}
|p_V|\leq p_{|V|}.
\end{equation}
A detailed discussion of signed real-valued perturbations  
of transition densities 
is given in \cite{MR2457489}, 
with a positive lower bound for $p_V$ resulting from Jensen's inequality.
A probabilistic interpretation of $p_n$ and $p_V$ may also be found in \cite{MR2457489}.
\end{remark}

Below we focus on {upper bounds} of $\tp=p_q$ for $q\ge 0$ and {\it transition densities} $p$, as defined above.
This is a less general setting than that of \cite{MR3000465},
but within this setting our bound \eqref{ineq:thm1a}
holds under 
more flexible condition \eqref{def:coeNs} on $p$ and $q$.
Namely, we consider 
another (measurable) transition density $\sp$ 
and $C\geq 1$ such that for all $x,y\in X$ and  $s<t$, inequality \eqref{eq:pCtp} holds.
We can estimate the cumulative effect of the integrations involved in \eqref{defpn}, or (\ref{def:p_n}).
The following result is an analogue of \cite[Lemma 5]{MR2876511} and \cite[Example 4.5]{2011-BHJ}. 
\begin{lemma}\label{lem:2}
Let $\theta \ge 0$
and $s_0< \ldots < s_k=t$ 
be such that 
\begin{equation}\label{eq:spq}
\int_s^{s_{i+1}} \int_{X} p(s,x,u,z)q(u,z) \sp(u,z,s_{i+1},y)\,dzdu \leq \theta\, \sp (s,x,s_{i+1},y), 
\end{equation}
for all $i=0,\ldots,k-1$, $s\in [s_i,s_{i+1}]$ and $x,y\in X$. Then for every $n\in \NN_0$,
\begin{equation}\label{eq:epkn}
p_n(s,x,t,y)\leq \binom{n+k-1}{k-1} \theta^n C^{k} \sp(s,x,t,y),\quad s\in [s_0, s_1],\,x,y\in X.
\end{equation} 
\end{lemma}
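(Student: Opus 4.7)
The plan is to induct on the number $k$ of sub-intervals, with a secondary induction on $n$ inside the base case.

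\textbf{Base case $k=1$.} Here $[s_0,s_1]=[s_0,t]$ and \eqref{eq:spq} reads
\begin{equation*}
\int_s^t\!\int_X p(s,x,u,z)q(u,z)\sp(u,z,t,y)\,dz\,du\leq \theta\,\sp(s,x,t,y),\quad s\in[s_0,t].
\end{equation*}
I would show $p_n(s,x,t,y)\leq \theta^n C\,\sp(s,x,t,y)$ by induction on $n$. The case $n=0$ is exactly \eqref{eq:pCtp}; for the inductive step I would insert the bound for $p_{n-1}$ into the recursion \eqref{def:p_n} and close via the displayed inequality.

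\textbf{Inductive step $k-1\to k$ (with $k\geq 2$).} Assuming the claim for $k-1$ intervals, I would split at the first interior node $s_1$. When $s<s_1$, identity~\eqref{eq:pCK} applied at $u=s_1$ gives
\begin{equation*}
p_n(s,x,t,y) = \sum_{m=0}^{n}\int_X p_m(s,x,s_1,z)\,p_{n-m}(s_1,z,t,y)\,dz.
\end{equation*}
The $k=1$ conclusion applied on $[s_0,s_1]$ bounds the first factor by $\theta^m C\,\sp(s,x,s_1,z)$, and the inductive hypothesis applied to the partition $s_1<\ldots<s_k$ bounds the second by $\binom{n-m+k-2}{k-2}\theta^{n-m}C^{k-1}\sp(s_1,z,t,y)$. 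Substituting and collapsing the $z$-integral by Chapman--Kolmogorov~\eqref{assume1} for $\sp$ yields
\begin{equation*}
p_n(s,x,t,y) \leq \theta^n C^k\,\sp(s,x,t,y)\sum_{m=0}^{n}\binom{n-m+k-2}{k-2},
\end{equation*}
and the hockey-stick identity $\sum_{j=0}^{n}\binom{j+k-2}{k-2}=\binom{n+k-1}{k-1}$ closes the induction. The boundary case $s=s_1$ reduces directly to the $(k-1)$-interval problem, whose inductive bound is already smaller than the claimed one since $C\geq 1$.

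\textbf{Expected obstacle.} I do not foresee any genuine analytic difficulty: the mechanism is essentially bookkeeping once \eqref{eq:pCK} and Chapman--Kolmogorov for $\sp$ are in hand. The subtle points are (i) choosing the recursion \eqref{def:p_n} rather than \eqref{def:p_n2} in the base case, since \eqref{eq:spq} estimates $\sp$ to the right of $q$; and (ii) making sure the combinatorics come out---namely, that the form $\sum_{j=0}^n\binom{j+k-2}{k-2}$ collapses to $\binom{n+k-1}{k-1}$. A reassuring sanity check is that $n=0$ yields the trivial $p\leq C^k\sp$, consistent with iterating \eqref{eq:pCtp}.
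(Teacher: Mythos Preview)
Your proof is correct and follows essentially the same strategy as the paper's: induction on $k$, with the base case $k=1$ handled by induction on $n$ via \eqref{def:p_n} and \eqref{eq:pCtp}, and the inductive step handled by the splitting identity \eqref{eq:pCK} together with Chapman--Kolmogorov for $\sp$ and the hockey-stick sum. The only cosmetic difference is that you split at the first interior node $s_1$ while the paper splits at the last one, $s_{k-1}$; the resulting binomial sums are the same after reindexing, and your explicit treatment of the boundary case $s=s_1$ is a nice touch that the paper leaves implicit.
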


\begin{proof}
For $k=1$, the estimate holds for $n=0$ by (\ref{eq:pCtp}), and then it holds for all $n\geq 1$ by induction, (\ref{def:p_n}) and (\ref{eq:spq}):
\begin{align*}
p_n(s,x,t,y)&=\int_s^t\int_X p(s,x,u,z)q(u,z)p_{n-1}(u,z,t,y)dzdu\\
&\le \theta^{n-1}C \int_s^t\int_X p (s,x,u,z)q(u,z)\sp(u,z,t,y)dzdu\\
&\le \theta^n C\sp(s,x,t,y),\quad \mbox{ where }\quad  s\in [s_0,s_1],\quad x,y\in X.
\end{align*}
If $k\ge 2$,
then by (\ref{eq:pCK}), induction and Chapman-Kolmogorov for $\sp$,
\begin{align*}
&p_n(s,x,t,y) = \sum_{m=0}^n \int_{X} p_m (s,x,s_{k-1},z)p_{n-m} (s_{k-1},z,t,y)\,dz \\
&\leq \sum_{m=0}^n \int_{X} \binom{m+k-2}{k-2} \theta^m C^{k-1} \sp (s,x,s_{k-1},z)   \theta^{n-m} C  \sp (s_{k-1},z,t,y)\,dz \\
&= \binom{n+k-1}{k-1} \theta^n C^{k} \sp(s,x,t,y)\,,\quad \mbox{ if } s\in [s_0,s_1],\,x,y\in X, n\in \NN_0.
\end{align*}
\end{proof}

In passing we note that the assumption and conclusion in the statement of \cite[Lemma 5]{MR2876511} need a slight strengthening for the induction to work properly:
each $t_{i+1}$ in the assumption there should be replaced by $\tau$ in
$[t_i,t_{i+1}]$, and each $t$ in the conclusion should be replaced by $\tau$
in $[t_{i},t_{i+1}]$ (then one proceeds as in the proof of Lemma~\ref{lem:2} above).
The correction does not influence applications of Lemma~5 or other results in \cite{MR2876511}.

{We further let $Q \colon \RR \times \RR \to [0,\infty)$ be {\it regular superadditive}, meaning that
\begin{equation}\label{ineq:Q}
Q(s,u) + Q(u,t) \leq Q(s,t)\,, \qquad \mbox{if}\quad s< u< t \,,
\end{equation}
$Q(s,t)=0$ if $s\geq t$, $s\mapsto Q(s,t)$ is right-continuous and $t\mapsto Q(s,t)$ is left-continuous. (The continuity assumptions 
are rather innocuous,
as we explain later on in Lemma~\ref{lem:3} and Lemma~\ref{lem:QQ}). We see that $t\mapsto Q(s,t)$ is non-decreasing and $s\mapsto Q(s,t)$ is non-increasing.
 For instance, if $\mu$ is a Radon measure on $\RR$, then $Q(s,t)=\mu(\{u\in \RR: s< u <t\})$ is regular superadditive}.
A regular superadditive $Q$ is infinitely decomposable in the following sense.

\begin{lemma}\label{lem:4}
Let $s\leq t$, $k\in \NN$ and $\theta\ge 0$ be such that $Q(s,t)\leq k\theta$. Then
$s=s_0\le s_1\le \ldots \le s_k =t$ exist such that $Q(s_{i-1},s_{i})\leq \theta$ for $i= 1,\ldots ,k$.
\end{lemma}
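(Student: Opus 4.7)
The plan is to argue by induction on $k$, producing the split points greedily with the aid of the one-sided continuity of $Q$.

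The base case $k=1$ is trivial: put $s_0=s$ and $s_1=t$; the hypothesis $Q(s,t)\le\theta$ is precisely what is required. For the inductive step, assume the lemma for $k-1$, and suppose $Q(s,t)\le k\theta$. The key idea is to take $s_1$ as large as possible subject to $Q(s,s_1)\le\theta$. Concretely, set
\[
 s_1\;=\;\sup\{u\in[s,t]:Q(s,u)\le\theta\}.
\]
The set is nonempty (it contains $s$, because $Q(s,s)=0$), so $s_1\in[s,t]$ is well defined. Choosing an increasing sequence $u_n\uparrow s_1$ from this set and invoking left-continuity of $t\mapsto Q(s,t)$ at $s_1$ gives $Q(s,s_1)\le\theta$.

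Next I would handle $[s_1,t]$. If $s_1=t$, simply pad by $s_2=\dots=s_k=t$, which contributes $Q(s_{i-1},s_i)=0$ for $i\ge 2$. Otherwise, for any $u\in(s_1,t]$ the definition of $s_1$ forces $Q(s,u)>\theta$, and superadditivity together with $Q\ge 0$ yields
\[
 Q(u,t)\;\le\;Q(s,t)-Q(s,u)\;<\;k\theta-\theta\;=\;(k-1)\theta.
\]
Now let $u\downarrow s_1$: by right-continuity of $s\mapsto Q(s,t)$ at $s_1$ we obtain $Q(s_1,t)\le(k-1)\theta$, and the inductive hypothesis applied on $[s_1,t]$ produces points $s_1\le s_2\le\dots\le s_k=t$ with $Q(s_{i-1},s_i)\le\theta$ for $i=2,\dots,k$. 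Combined with the bound on $Q(s,s_1)$, this gives the required decomposition.

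The only delicate point is that the bound $Q(u,t)<(k-1)\theta$ comes with a strict inequality, which must be turned into the non-strict $Q(s_1,t)\le(k-1)\theta$ needed for the induction; this is exactly what the right-continuity of $Q$ in its first variable is for, and it is the reason the regularity hypotheses on $Q$ were included. Everything else — existence of the supremum, the padding in the boundary case $s_1=t$, and monotonicity — is routine.
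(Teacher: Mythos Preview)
Your proof is correct and uses essentially the same greedy idea as the paper---choose split points by a threshold condition and use the one-sided continuity of $Q$ to pass the key inequalities to the limit. The only cosmetic differences are that you frame the argument as an induction on $k$ and define $s_1$ as $\sup\{u:Q(s,u)\le\theta\}$, whereas the paper defines all the points at once via $s_i=\inf\{u:Q(s,u)\ge i\theta\}$; the analytic content (left-continuity for $Q(s,s_1)\le\theta$, right-continuity for the remaining mass) is identical.
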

\begin{proof}
We may and do assume that $k>1$ and $(k-1)\theta<Q(s,t)\leq k\theta$.
Let $s_i=\inf \{u: Q(s,u)\geq i\theta\}$ for $i=1,\ldots, k-1$.
If $s\le u <s_i$, then $Q(s,u)<i\theta$,  and so $Q(s,s_i)\leq i\theta$.
If $s_i<u<s_{i+1}$, then $Q(s,u)\geq i\theta$ and 
$Q(s,u)+Q(u,s_{i+1})\leq Q(s,s_{i+1})\leq (i+1)\theta$, thus $Q(u,s_{i+1})\leq \theta$. Letting $u\to s_i$ we obtain $Q(s_i,s_{i+1})\leq \theta$, which is also true if $s_i=s_{i+1}$.
 \end{proof}

\begin{definition}\label{def:N}
We write $q \in \pN(p,\sp,C,\eta,Q)$ {if} $q\geq 0$ is defined (and measurable) on space-time, $p$ and $\sp$ are (measurable) transition densities, $C\geq 1$, $\eta\geq 0$, $Q$ is regular superadditive, and {\rm (\ref{eq:pCtp})} and  \eqref{def:coeNs} 
hold
for all $s<t$ and $x,y\in X$.
\end{definition}

The terms $\eta$ and  $Q(s,t)$ of \eqref{def:coeNs} propagate differently in estimates of $p_n$ below. We may think about $\eta$ as giving a bound for instantaneous growth of mass, while $Q$ gives a cap for growth accumulated over time (see \cite{MR3000465} and \cite{2011-BHJ} for such insights).

We are in a position to prove our first main result.
\begin{proof}[Proof of Theorem~\ref{thm:1}]
Let 
$k\in \NN$. By Lemma \ref{lem:4}, $s_0=s<s_1<\ldots <s_k =t$ exist such that $Q(s_{i-1}, s_{i})\leq Q(s,t)/k$ for $i=1,\ldots,k$. \new{For $\varepsilon\in (0,1-\eta)$ we choose $k\in\NN$ such that $(k-1)\varepsilon\le Q(s,t)<k\varepsilon$.} By Lemma \ref{lem:2}  with $\theta = \eta+Q(s,t)/k$,
and by Taylor\rq{}s expansion, 
for all $x,y \in X$ we get
\begin{align*}
\tp&(s,x,t,y)\leq \sum_{n=0}^{\infty}
\new{p_n(s,x,t,y)}\\
&\leq \sum_{n=0}^{\infty} \binom{n+k-1}{k-1}C^{k}\left[\eta+Q(s,t)/k\right]^n \sp(s,x,t,y)\\
& = \left( \frac{C}{1-\eta-Q(s,t)/k} \right)^k \sp(s,x,t,y)\new{\,.}
\end{align*}
This ends the proof.
\end{proof}

Analogous results hold if we state our assumptions and conclusions for $s,t$ in a finite time horizon:
$-\infty<t_1\le s<t \le t_2<\infty$.
If $Q$ in Theorem~\ref{thm:1} 
is 
bounded,
then $\tp \leq const.\, \sp$ 
uniformly 
in time.
We may consider $q\in \pN(p,p^*,C,\eta,Q)$ with 
\newM{$\eta <1$},
$q_1(u,z) = q(u,z) \, \chiI_{[0,1]}(u)$, and
bounded 
superadditive function 
$Q_1(s,t)=Q((s\vee 0)\land 1 , (t\land 1)\vee 0)$.
Then,
\begin{align*}
\int_s^t \int_{X} p(s,x,u,z)q_1(u,z)p^*(u,z,t,y)\,dzdu \leq \big[ \eta + Q_1 (s,t)\big] p^* (s,x,t,y)\,.
\end{align*}
Thus, by Theorem \ref{thm:1}, 
$\tilde p\le c p^*$ uniformly in time.
If $p^*$ is not comparable with $p$ in space then the estimates in Theorem~\ref{thm:1} cannot be sharp. This is regrettable, but quite common, e.g., in Schr\"odinger perturbations of Gaussian kernel discussed later on in the paper.
The role of $\sp$ is similar to that of $f$ in \cite[Theorem~3.2]{2011-BHJ}, but the results of \cite{2011-BHJ} do not 
apply in the present setting, if $p\neq \sp$.
If (\ref{def:coeNs}) holds with $\sp$ replaced by $p$, then we may take $\sp=p$ and  $C=1$ in (\ref{eq:pCtp}) and Theorem~\ref{thm:1}. 
However, in this case a more efficient inductive argument of \cite{MR3000465} gives better estimates \eqref{eq:metgKpt} and \eqref{eq:gpte4} above.
If $q(u,z)\le f(u)$, then we may take $Q(s,t)=\int_s^t f(u)du$, $\eta=0$ and $p=p^*$.
In fact, if $q(u,z)=f(u)$, then $p_n(s,x,t,y)=Q(s,t)^n p(s,x,t,y)/n!$ and
$\tp(s,x,t,y)=e^{Q(s,t)}p(s,x,t,y)$.

Less trivial applications of Theorem~\ref{thm:1} require 
detailed study of $p$ and $q$, and judicious choice of $p^*$.
In particular, to estimate 
$\tp$ for given $p$, $p^*$ and $q$, we wish to verify (\ref{def:coeNs}). This task
may be facilitated by splitting
the singularities of $p$ and $\sp$ in the integral of (\ref{def:coeNs}). 
Various versions of the 3G Theorem are used to this end,
see \cite{MR2457489}, Bogdan and Jakubowski \cite{MR2283957}, and the "elliptic case" of Cranston, Fabes and Zhao \cite{MR936811}, and Hansen \cite{MR2207878}.
For instance the transition density of the fractional Laplacian enjoys the following 3G inequality
\begin{equation}
        p(s,x,u,z) \land p(u,z,t,y) \le c\; p(s,x,t,y), 
       \label{3P:ineq}
\end{equation}
where $x,z,y \in  \Rd$, $s<u<t$ (\cite[Theorem~4]{MR2283957}), and this yields
\begin{align*}
  p(s,x,u,z)p(u,z,t,y)&=  [  p(s,x,u,z) \lor p(u,z,t,y)][p(s,x,u,z) \land p(u,z,t,y)] \\
&\le c\; [p(s,x,u,z) + p(u,z,t,y)]p(s,x,t,y).
\end{align*}
In this situation we can use $p^*=p$ in Theorem~\ref{thm:1} to estimate $\tilde p$, provided 
\begin{align*}
c\int_s^t \int_{X} p(s,x,u,z)q(u,z)\,dzdu +
c\int_s^t \int_{X} q(u,z)p(u,z,t,y)\,dzdu \leq 
\eta + Q(s,t)\,.
\end{align*}
Noteworthy,
3G  fails for Gaussian kernels, 
and  for such kernels the methods of \cite{MR3000465} fall short of optimal known results. This circumstance largely motivates the present development.
In the next sections we show how to estimate quite general Schr\"odinger perturbations of Gaussian kernels
by means of Theorem~\ref{thm:1}.
The application depends on the  4G inequality stated in \eqref{eq:4G}
of Theorem~\ref{thm:4P}\new{, which} 
partially substitutes for 3G. We note that Theorem~\ref{thm:4P} improves \cite[(4.4)]{MR1488344},
since we give an optimal constant in \eqref{eq:4G}. 
Explicit constants
matter in our applications, because we specifically require 
$\eta<1$
in Theorem~\ref{thm:1}.

\section{Estimates of Gaussian kernels}\label{sec:eGk}

{As usual, $a\vee b=\max\{a,b\}$ and $a\wedge b=\min\{a,b\}$.} Let $0< \alpha<\infty$, and 
\begin{align}
\label{eq:wkp}
L(\alpha)
&=\max_{\tau\geq \alpha\vee 1/\alpha } \left[  \ln\left(1+\tau\right)  -\frac{\tau-\alpha}{1+\tau} \ln (\alpha \tau )\right]
 \\
&= \max_{\tau\geq \alpha\vee 1/\alpha} \left[  \ln\left(1+\frac1{\tau}\right) -\ln \alpha +\frac{1+\alpha}{1+\tau} \ln (\alpha \tau )\right].
 \nonumber
\end{align}
Clearly, $L(\alpha)<\infty$, and $\tau=\alpha\vee 1/\alpha$ yields $L(\alpha)\ge \ln (1+\alpha\vee 1/\alpha)$. 
By an application of calculus,
$L(\alpha)=\ln(1+\alpha)$ 
if (and only if) 
$\alpha \geq e^{1/2}$.
We let
 \begin{align*}
{\ff}(\tau,x)= \ln \tau+x^2/\tau\,, \qquad \tau>0,\; x\geq 0.
\end{align*}

\begin{lemma}\label{lem:imp}
If $\alpha>0$, $L=L(\alpha)$, $\xi,\eta\geq 0$, and $\tau>0$, then
\begin{align}\label{ineq:4}
{\ff} (1+\tau, \xi +\eta)\leq \ff(1,\xi) \lor {\ff} (\alpha \tau,\eta) + \frac{\eta^2}{\tau} +L.
\end{align}
If~$L<L(\alpha)$, then the inequality fails for some $\xi,\eta\geq 0$ and $\tau>0$.
\end{lemma}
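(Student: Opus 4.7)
The plan is to fix $\tau>0$ and study
\[
\Psi_\tau(\xi,\eta):=\ff(1+\tau,\xi+\eta)-\bigl(\ff(1,\xi)\vee\ff(\alpha\tau,\eta)\bigr)-\eta^2/\tau
\]
on $\xi,\eta\ge 0$, so that \eqref{ineq:4} becomes $\sup_{\tau,\xi,\eta}\Psi_\tau\le L(\alpha)$. Writing out the definition of $\ff$, this is
\[
\Psi_\tau(\xi,\eta)=\ln(1+\tau)+\frac{(\xi+\eta)^2}{1+\tau}-\max\Bigl(\xi^2,\ \ln(\alpha\tau)+\tfrac{\eta^2}{\alpha\tau}\Bigr)-\frac{\eta^2}{\tau}.
\]
The elementary engine is the bound $(\xi+\eta)^2/(1+\tau)\le \xi^2+\eta^2/\tau$, which is $(\tau\xi-\eta)^2\ge 0$ in disguise, with equality iff $\eta=\tau\xi$. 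In the region of $(\xi,\eta)$ where $\xi^2$ realizes the inner max in $\Psi_\tau$, this immediately yields $\Psi_\tau\le\ln(1+\tau)$, and stationarity there forces the equality condition $\eta=\tau\xi$; compatibility with the defining inequality of that region restricts $\tau$ to the closed interval with endpoints $\alpha\wedge 1/\alpha$ and $\alpha\vee 1/\alpha$.

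In the complementary region $\partial_\xi\Psi_\tau=2(\xi+\eta)/(1+\tau)\ge 0$, so the maximum is attained on the boundary curve $\xi^2=\ln(\alpha\tau)+\eta^2/(\alpha\tau)$, on which $\Psi_\tau$ still equals $\ln(1+\tau)+(\xi+\eta)^2/(1+\tau)-\xi^2-\eta^2/\tau$. Parametrizing by $\eta$ and computing $d\Psi_\tau/d\eta$ reduces, after clearing denominators, to the clean factorization $(\eta-\tau\xi)(\eta-\alpha\xi)=0$. The root $\eta=\tau\xi$ recovers the Cauchy--Schwarz equality case with value $\ln(1+\tau)$. The root $\eta=\alpha\xi$, combined with the boundary relation $\xi^2=\tau\ln(\alpha\tau)/(\tau-\alpha)$ and the identity $\tau(1+\alpha)^2-\tau(1+\tau)-\alpha^2(1+\tau)=-(\tau-\alpha)^2$, yields
\[
\Psi_\tau=\ln(1+\tau)-\frac{(\tau-\alpha)\ln(\alpha\tau)}{1+\tau};
\]
the feasibility $\xi^2\ge 0$ restricts this branch to $\tau\ge\alpha\vee 1/\alpha$ or $\tau\le\alpha\wedge 1/\alpha$.

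Assembling the three $\tau$-regimes: for $\tau\in[\alpha\wedge 1/\alpha,\alpha\vee 1/\alpha]$, $\sup_{\xi,\eta}\Psi_\tau=\ln(1+\tau)\le\ln(1+\alpha\vee 1/\alpha)\le L(\alpha)$ (the last inequality is the observation already recorded after \eqref{eq:wkp}); for $\tau\ge\alpha\vee 1/\alpha$ the supremum is precisely the quantity maximized in $L(\alpha)$, hence $\le L(\alpha)$; for $\tau\le\alpha\wedge 1/\alpha$ the value on the second branch is dominated by $\ln(1+\tau)$ via a one-line sign check using $(\tau-\alpha)\ln(\alpha\tau)\ge 0$. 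This proves \eqref{ineq:4}. For sharpness, pick $\tau^\ast$ attaining the defining maximum of $L(\alpha)$ and set $\eta=\alpha\xi$, $\xi^2=\tau^\ast\ln(\alpha\tau^\ast)/(\tau^\ast-\alpha)$; the steps above become equalities and yield $\Psi_{\tau^\ast}(\xi,\eta)=L(\alpha)$, so no smaller constant can work.

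The main obstacle I anticipate is not the calculus itself---it hinges only on a single Cauchy--Schwarz inequality and one quadratic factorization---but the feasibility bookkeeping across the three $\tau$-regimes, together with the routine sign checks needed to exclude the nonnegativity boundaries $\xi=0$ and $\eta=0$ as sources of a larger $\Psi_\tau$.
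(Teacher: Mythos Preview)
Your approach is essentially the paper's: both split according to which branch of $f(1,\xi)\vee f(\alpha\tau,\eta)$ is active, both find the two critical configurations $\eta=\tau\xi$ and $\eta=\alpha\xi$, and both arrive at the same expression $\ln(1+\tau)-(\tau-\alpha)\ln(\alpha\tau)/(1+\tau)$. Your factorization $(\eta-\tau\xi)(\eta-\alpha\xi)=0$ for the derivative along the boundary curve is a clean way to obtain the second critical point; the paper reaches it via $\xi'=1/\tau$.

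There is, however, a genuine gap in your ``assembling'' step. For $\tau>\alpha\vee 1/\alpha$ you assert that $\sup_{\xi,\eta}\Psi_\tau$ equals $\ln(1+\tau)-(\tau-\alpha)\ln(\alpha\tau)/(1+\tau)$, but your treatment of Region~1 (where $\xi^2$ realizes the max) only yields $\Psi_\tau\le\ln(1+\tau)$, which for large $\tau$ exceeds $L(\alpha)$. Noting that the equality configuration $\eta=\tau\xi$ is incompatible with Region~1 shows the bound $\ln(1+\tau)$ is not attained there, but it does not by itself show the Region-1 supremum is at most the boundary-curve value. You need the extra observation that for such $\tau$ the function $\ln(1+\tau)-(\tau\xi-\eta)^2/(\tau(1+\tau))$ has no stationary point inside Region~1, hence its maximum over (closed) Region~1 is on the curve as well---then both regions reduce to the curve. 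You also need to dispose of the endpoints of the boundary curve: at $\eta=0$ one gets $\ln(1+\tau)-\tau\ln(\alpha\tau)/(1+\tau)$, which must be compared to the interior critical value, and one must check $\Psi_\tau\to-\infty$ as $\eta\to\infty$ along the curve. The paper handles these issues by a different order of reduction: it proves the inequality on Region~1 first, reducing general $(\xi,\eta)$ there to the two special cases by monotonically adjusting $\eta$ (and explicitly checking $\phi(0)$ and the large-$\eta$ behavior), and only then extends to Region~2 by decreasing $\xi$. Your anticipated ``routine sign checks'' undersell what is missing; the Region-1 argument for large $\tau$ is short but is a step, not a check.
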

\begin{proof}
We first 
prove the following implication:
\begin{align}\label{imp:pom2}
\mbox{If } \quad \frac{\eta^2}{\alpha \tau}+ \ln(\alpha \tau) \leq \xi^2 , \quad \mbox{then} \quad\ln(1+\tau)\leq \frac{(\tau\xi - \eta)^2}{\tau(1+\tau)}  + L   \,.
\end{align}
To this end we consider 
two special cases:
\begin{enumerate}
\item[Case 1.] $\eta^2/(\alpha \tau)+ \ln(\alpha \tau) \leq \xi^2$ and  $\eta=\tau\xi$\,,
\item[Case 2.]  $\eta^2/(\alpha \tau)+ \ln(\alpha \tau) = \xi^2$ and  $\eta<\tau\xi$\,.
\end{enumerate}

Case 1 implies that $\left( \tau/\alpha - 1 \right) \xi^2 +\ln(\alpha \tau) \leq 0$. This is possible only if
$\tau\leq \alpha\vee 1/\alpha$, whence $\ln(1+\tau)\leq \ln(1+ \alpha\vee 1/\alpha)\leq L(\alpha)$, which verifies (\ref{imp:pom2}).

In Case 2, if $\tau \leq \alpha\vee 1/\alpha$, then $\ln(1+\tau)\leq \ln(1+\alpha\vee 1/\alpha)\leq L(\alpha)$ again.
 For $\tau>\alpha\vee 1/\alpha$ we consider $\xi=\xi(\eta)$ as a function of $\eta$, and we have
$$\phi(\eta):=\ln(1+\tau)-\frac{(\tau\xi-\eta)^2}{\tau(1+\tau)} \leq L .$$
Indeed, we see that the condition $\eta<\tau\xi$ holds automatically since $\eta^2/\tau^2\leq \eta^2/(\alpha \tau)=\xi^2-\ln(\alpha \tau)<\xi^2$. Our assumption now reads $\xi^2={\eta^2}/(\alpha \tau) + \ln(\alpha \tau)$, where $\xi, \eta\geq 0$ and $\tau>\alpha\vee 1/\alpha$.
Thus, $\xi\rq{}=\eta/(\alpha \tau \xi)$. Note that $\phi(0)=\ln(1+\tau)-\tau\ln(\alpha \tau)/(1+\tau)\leq L$.
Furthermore,
$$\phi\rq{}(\eta)=-2(1+\tau)^{-1}(\tau\xi-\eta)(\xi\rq{}-1/\tau).$$
We have $\phi\rq{}(\eta)=0$ only if $\xi'=1/\tau$, or $\xi=\eta/\alpha$, and then $\eta^2/(\alpha \tau)+\ln(\alpha \tau)=\eta^2/\alpha^2$ and 
$\phi(\eta)=\ln(1+\tau)-(\tau-\alpha)\ln(\alpha \tau)/(1+\tau)$.
This in fact shows that $L=L(\alpha)$ is sharp in (\ref{imp:pom2}), see \eqref{eq:wkp}.
Furthermore, $\phi^{'}(\eta) \leq 0$ if $\xi'\geq 1/\tau$, or $\left( \tau/\alpha - 1 \right)\eta^2/\alpha \geq \tau\ln(\alpha \tau)$, in particular if $\eta$ is large.
Thus, $\phi$ is decreasing for large $\eta$, which yields (\ref{imp:pom2}) in Case 2.

Consider general $\xi,\eta$ and $\tau>0$ in (\ref{imp:pom2}). If $\eta>\tau\xi$, then decreasing $\eta$ to $\tau\xi$ strengthens (\ref{imp:pom2}), so eventually we are done by Case 1.
If $\eta<\tau\xi$, then we increase $\eta$ and strengthen the consequent 
 \new{in} (\ref{imp:pom2}), 
getting 
under Case~1 or~2.

Putting \eqref{imp:pom2} differently, $\ln(1+\tau)+(\xi+\eta)^2/(1+\tau)\leq \xi^2 + \eta^2/\tau  + L$, provided $\eta^2/(\alpha \tau)+ \ln(\alpha \tau) \leq \xi^2$.
Therefore we have 
(\ref{ineq:4}) under the assumption ${\ff} (\alpha \tau,\eta) \leq {\ff}(1,\xi)$, and the constant $L$ cannot be improved.
In particular, (\ref{ineq:4}) holds if $f(1,\xi)=f(\alpha \tau,\eta)$. Decreasing $\xi$ keeps (\ref{ineq:4}) valid because $f(1+\tau,\xi+\eta)$ then decreases, too. 
\end{proof}

We note that 3G inequality fails for $g_a$ defined in \eqref{def:g_c}, because
\begin{equation}\label{no3G}
\frac{g_a(0,0,t,y)\wedge g_a(t,y,2t,2y)}{g_a(0,0,2t,2y)}= \frac{(4\pi t/a)^{-d/2} e^{-|y|^2/(4t/a)}}{(8\pi t/a)^{-d/2} e^{-|y|^2/(2t/a)}}=2^{d/2}e^{a|y|^2/4t},
\end{equation}
is not bounded in $y\in \Rd$.
The next inequality \eqref{eq:4G} between four different instances of the Gaussian kernel substitutes for 
3G, 
and so it is coined 4G. 
We note that \cite[the proof of (4.4)]{MR1488344} yields \eqref{eq:4G}, too, 
although with a 
rough constant $M$ (see also the first equality on p. 465 in \cite{MR1978999} and the last equality on p. 15 in Friedman \cite{MR0181836}).
We also acknowledge a similar result (with rough constants) for the heat kernel of smooth bounded domains 
by Riahi \cite[Lemma~3.1]{MR2320609}.
The 
optimality of the right-hand side of \eqref{eq:4G}
is important in view of \eqref{def:coeNs} and \eqref{ineq:thm1a}, and 
may be of independent interest.
In fact, inspection of our calculations also reveals that $b-a$  in $g_{b-a}$ of \eqref{eq:4G} cannot be replaced by a bigger constant.

We are in a position to prove our second main result.
\begin{proof}[Proof of Theorem \ref{thm:4P}]
We have
\begin{align}\label{eq:lGk}
\ln g_a(s,x,t,y)&=-\frac{d}{2}\ln 4\pi-\frac{d}{2}\ln(t-s)+\frac{d}{2}\ln a -\frac{a|y-x|^2}{4(t-s)}.
\end{align}
Considering $\sqrt{2d}x$, $\sqrt{2d}y$ and $\sqrt{2d}z$ instead of $x$, $y$ and $z$ in \eqref{eq:lGk}, we see that \eqref{eq:4G} is equivalent to
\begin{align*}
&-\ln(u-s)+\ln b -\frac{b|z-x|^2}{u-s}
-\ln(t-u)+\ln a -\frac{a|y-z|^2}{t-u}\\
&\le
\frac{2}{d}\ln M+
\left[-\ln(u-s)+\ln (b-a) -\frac{(b-a)|z-x|^2}{u-s}\right]\vee\\
&\left[-\ln(t-u)+\ln a -\frac{a|y-z|^2}{t-u}\right]
-\ln(t-s)+\ln a -\frac{a|y-x|^2}{t-s}.
\end{align*}
We rewrite this using the identity $a+b-a\vee b=a\wedge b$, and we obtain
\begin{align*}
&\ln b -\ln (b-a)- \frac{a|z-x|^2}{u-s}+
\left[-\ln(u-s)+\ln (b-a) -\frac{(b-a)|z-x|^2}{u-s}\right]\land\\
&\left[-\ln(t-u)+\ln a -\frac{a|y-z|^2}{t-u}\right]
\le \frac{2}{d}\ln M
-\ln(t-s)+\ln a -\frac{a|y-x|^2}{t-s}.
\end{align*}
Adding $\ln (t-u)-\ln a$ to both sides \new{(and
moving terms from one side to another)}, we have
\begin{align*}
\frac{a|y-x|^2}{t-s}&+\ln\frac{t-s}{t-u}
\le \frac{2}{d}\ln M+\ln \frac{b-a}{b}\\
&+
\frac{a|y-z|^2}{t-u}
\vee \left[\ln\frac{u-s}{t-u}+\ln \frac{a}{b-a} +\frac{(b-a)|z-x|^2}{u-s}\right]+\frac{a|z-x|^2}{u-s}.
\end{align*}
We denote $\alpha = {a}/{(b-a)}$, $\tau=(u-s)/(t-u)$, $\xi=|y-z|\sqrt{{a}/(t-u)}$ and $\eta= |z-x|\sqrt{{a}/{(t-u)}}$, and observe that $(t-s)/(t-u)=1+\tau$. Since $|y-x|\le |z-x|+|y-z|$, \new{where equality may hold,} we see that \eqref{eq:4G} is equivalent to the following inequality (to hold for all $\tau>0$ and $\xi,\eta\ge 0$),
\begin{equation}\label{eq:rtau}
\frac{(\xi+\eta)^2}{1+\tau}+\ln (1+\tau)\le
\frac{2}{d}\ln M +\ln \frac{b-a}{b}+\xi^2\vee 
\left[\frac{\eta^2}{\alpha\tau}+\ln(\alpha \tau)\right]+\eta^2/\tau.
\end{equation}
We may now use Lemma~\ref{lem:imp}. In fact, the constant $M$ in \eqref{eq:4G} is optimal if  
$$
\frac{2}{d}\ln M+\ln \frac{b-a}{b}=L(\alpha). 
$$
Considering 
$\alpha=a/(b-a)\geq e^{1/2}$,
we obtain the last statement of the theorem from a comment following \eqref{eq:wkp}.
\end{proof}

\begin{remark}\label{acb}
\rm
In applications we usually choose $a$ (smaller than but) close to $b$, so to not lose much of Gaussian asymptotics, and  in this case the optimality of the simple formula $M=\left( 1-a/b \right)^{-d}$ comes as a nice feature of our 4G Theorem.
\end{remark}

\section{Applications and discussion}\label{sec:Gaus_tr}

In this section we discuss applications of Theorem~\ref{thm:1} to 
fundamental solutions 
of second order 
parabolic differential operators.
Namely, The\-orem~\ref{thm:1}, aided by Theorem~\ref{thm:4P}, 
allows for rather singular Schr{\"o}dinger perturbations of such operators without dramatically changing the magnitude of their 
fundamental solutions.
Most of the estimates given below are known, but our proofs are more synthetic and considerably shorter,
and we have explicit 
constants in the estimates, which may be useful in homogenization problems. 
We also note that in the case of signed perturbations
(not considered here) 
very precise lower bounds 
{are obtained} from Jensen's inequality for bridges \cite{MR2457489}, see also Remark~\ref{sper} above. We begin with a discussion of 
Kato-type conditions (historical comments are given in Remark~\ref{rK}).

Let $d\geq 3$. A Borel function $U\colon \Rd \to \RR$ is of Kato class, 
if (see \eqref{def:I} for definition)
\begin{align}\label{def:Kato}
\lim_{\delta \to 0^+} I_{\delta}(U) =0 \,. 
\end{align}
A typical example is $U(z)=|z|^{-2+\varepsilon}$, where $0<\varepsilon\le2$.
By Aizenman and Simon \cite[Theorem~4.5]{MR644024}, Chung and Zhao
\cite[
Theorem~3.6]{MR1329992} or Zhao \cite[Theorem 1]{MR1132313},
(\ref{def:Kato}) holds if and only if for every $c>0$
the following condition is satisfied (see \eqref{def:g_c}),
\begin{align}\label{equiv:Kato}
\lim_{h \to 0^+} \sup_{s\in \RR,\, x\in \Rd} \int_s^{s+h} \int_{\Rd} g_c (s,x,u,z) |U(z)|\,dzdu =0\,.
\end{align}
In fact, $C_0=C_0(d,c)$ and $C_1=C_1(d,c)$ exist such that
for all $h>0$ and $U$,
\begin{align}
C_0\, I_{\sqrt{h}}(U)  
\leq \sup_{s\in\RR,\,x\in\Rd} \int_s^{s+h} \int_{\Rd} g_c (s,x,u,z)|U(z)|\,dzdu  \leq \, C_1\, I_{\sqrt{h}}(U)\,.
\label{eq:psup}
\end{align}
The lower bound of \eqref{eq:psup} is given in \cite[(4.5)]{MR644024} and \cite[Lemma~3.5]{MR1329992}. The upper bound can be proved as in \cite[Lemma~11]{MR2283957}, but for the reader's convenience we 
give a simple, explicit and more flexible argument
showing (after Proposition~\ref{prop:nice} below) that in fact in \eqref{eq:psup} we may take
\begin{equation}\label{ecK}
C_1(d,c)=\Gamma(d/2-1) \pi^{-d/2}[ c+2^d d(d-2)]/4.
\end{equation}

To this end for\new{,} $x\in \Rd$ and $r>0$\new{,} we denote $B(x,r)=\{y\in \Rd: |y-x|<r\}$, and we consider $1_{B(0,r)}$, the indicator function of the ball of radius $r>0$.
We call 
\new{$f:\Rd\to [-\infty,\infty]$} radially decreasing if $f(x_1)\geq f(x_2)$ whenever $|x_1|\leq |x_2|$. We observe the following three auxiliary results.
\begin{lemma}\label{lem:1e}
Let  $r>0$ and let $f\geq 0$ be constant on $B(0,r)$ and radially decreasing. Then,
$$
f* 1_{B(0,r)} \geq |B(0,r/2)| f \,.
$$
\end{lemma}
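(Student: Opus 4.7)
The plan is to interpret the convolution as an integral of $f$ over a translated ball and then to extract, from that ball, a sub-ball of radius $r/2$ on which $f(y)\ge f(x)$ pointwise. Writing
\[
(f*\chiI_{B(0,r)})(x)=\int_{B(x,r)}f(y)\,dy,
\]
the goal reduces to exhibiting, for every $x\in\Rd$, a ball $B(z,r/2)\subset B(x,r)$ on which $f\ge f(x)$; then the conclusion follows from monotonicity of the integral and translation invariance of the Lebesgue measure, since $|B(z,r/2)|=|B(0,r/2)|$.

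I would split into two cases. First, if $|x|\le r/2$, I would take $z=0$: for any $y\in B(0,r/2)$ we have $|y-x|\le|y|+|x|<r/2+r/2=r$, so $B(0,r/2)\subset B(x,r)$, and moreover $B(0,r/2)\cup\{x\}\subset B(0,r)$, where $f$ is constant and equal to $f(x)$; hence $f\equiv f(x)$ on the chosen sub-ball. Second, if $|x|>r/2$, I would slide the center $z$ along the segment from $0$ to $x$, taking $z=(1-r/(2|x|))\,x$, so that $|x-z|=r/2$ and $|z|=|x|-r/2\ge 0$. Then for $y\in B(z,r/2)$ the triangle inequality gives
\[
|y-x|\le|y-z|+|z-x|<r/2+r/2=r,\qquad |y|\le|z|+|y-z|<|x|-r/2+r/2=|x|,
\]
so $B(z,r/2)\subset B(x,r)$ and every $y$ in this sub-ball satisfies $|y|<|x|$. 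By the radially decreasing hypothesis, $f(y)\ge f(x)$ on $B(z,r/2)$, completing this case.

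Combining the two cases,
\[
(f*\chiI_{B(0,r)})(x)\ge\int_{B(z,r/2)}f(y)\,dy\ge f(x)\,|B(z,r/2)|=|B(0,r/2)|\,f(x),
\]
which is the claim. The only real subtlety is the second case: one must find a placement of the half-radius ball that stays inside $B(x,r)$ while keeping all its points closer to the origin than $x$ is. Picking $z$ on the segment from $0$ to $x$ at distance exactly $r/2$ from $x$ makes both constraints tight simultaneously, which is the key geometric observation; everything else is routine triangle-inequality bookkeeping.
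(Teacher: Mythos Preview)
Your proof is correct and follows essentially the same geometric idea as the paper's: locate inside $B(x,r)$ a ball of radius $r/2$ on which $f\ge f(x)$, then integrate. The only cosmetic difference is that the paper splits cases at $|x|=r$ and phrases the sub-ball implicitly as $B(0,r)\cap B(x,r)$ (resp.\ $B(0,|x|)\cap B(x,r)$), whereas you split at $|x|=r/2$ and give the center $z$ explicitly; the content is the same.
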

\begin{proof}
We have $f*1_{B(0,r)}(x)=\int_{B(x,r)}f(y)dy$. If $|x|<r$, then
$$f*1_{B(0,r)}(x)\ge f(0)|B(0,r)\cap B(x,r)|\ge f(0)|B(0,r/2)|=f(x)|B(0,r/2)|,
$$
where $|B(0,r/2)|$ denotes the volume of $B(0,r/2)$.
If $|x|\ge r$, then
$$f*1_{B(0,r)}(x)\ge f(x)|B(0,|x|)\cap B(x,r)|\ge f(x)|B(0,r/2)|.$$
\end{proof}
\begin{lemma}\label{lem:2e}
Let $0\le k\le K$ be radially decreasing and fix $r>0$. Let $c_1
=\int_{\Rd}k
(x)dx$, 
$c_2
=K(r,0,\ldots,0)|B(0,r/2)| $. Let $c_3=1$ if $c_2=0$ or $\infty$, and let $c_3=1+c_1/c_2$ otherwise.
Then,
\begin{align*}
\sup_{x\in\Rd} \int_{\Rd} |U(z)| k
(x-z)\,dzdu
\leq  
c_3
\sup_{x\in\Rd} \int_{B(x,r)} |U(z)| K(x-z)\,dz\,.
\end{align*}
\end{lemma}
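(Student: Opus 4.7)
The plan is to split $\int_{\Rd} |U(z)|k(x-z)\,dz$ into the contribution from $B(x,r)$ (the local part) and its complement (the tail), and bound each piece by a multiple of
\[
S:=\sup_{y\in\Rd}\int_{B(y,r)} |U(z)| K(y-z)\,dz\,.
\]
For the local part, the pointwise bound $k\leq K$ gives $\int_{B(x,r)} |U(z)|k(x-z)\,dz \leq S$ at once.

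For the tail, I would introduce the radial truncation $\hat k(y):= k(y)$ for $|y|\geq r$ and $\hat k(y):= k(r,0,\ldots,0)$ for $|y|<r$. Then $\hat k$ is radially decreasing, constant on $B(0,r)$, satisfies $\hat k\leq k$ pointwise, and agrees with $k$ off $B(0,r)$. Applying Lemma~\ref{lem:1e} to $\hat k$ yields the pointwise mean-value bound
\[
\hat k(w)\leq \frac{1}{|B(0,r/2)|}\int_{B(w,r)}\hat k(y)\,dy\,,\qquad w\in\Rd\,.
\]
In particular, for $z\in B(x,r)^c$ (so that $k(x-z)=\hat k(x-z)$), this controls $k(x-z)$ by a convolution-type integral of $\hat k$. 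Multiplying by $|U(z)|$, integrating over $B(x,r)^c$, and swapping order of integration via Fubini–Tonelli --- after observing that $\{y:|y-(x-z)|<r\}=\{y:z\in B(x-y,r)\}$ --- gives
\[
\int_{B(x,r)^c}\!|U(z)|k(x-z)\,dz\leq \frac{1}{|B(0,r/2)|}\int_{\Rd}\hat k(y)\Bigl(\int_{B(x-y,r)}|U(z)|\,dz\Bigr)dy\,.
\]

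Now the inner integral is handled by the radial decrease of $K$: on $B(x-y,r)$ we have $K(x-y-z)\geq K(r,0,\ldots,0)$, so
\[
\int_{B(x-y,r)}|U(z)|\,dz \;\leq\; \frac{1}{K(r,0,\ldots,0)}\int_{B(x-y,r)}|U(z)|K(x-y-z)\,dz\;\leq\; \frac{S}{K(r,0,\ldots,0)}\,.
\]
Combined with $\int\hat k\leq\int k=c_1$, the tail is bounded by $c_1 S/\bigl(|B(0,r/2)|K(r,0,\ldots,0)\bigr)=c_1 S/c_2$. Adding the two parts and taking the supremum in $x$ yields the claim with $c_3=1+c_1/c_2$. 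The degenerate case $c_2=0$ forces $K\equiv 0$ on $B(0,r)^c$, hence $k\equiv 0$ there by $k\leq K$, so only the local part survives and $c_3=1$ suffices; the case $c_2=\infty$ is likewise trivial.

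The only real obstacle is the Fubini step together with the choice to weaken $\hat k$-majorization enough to bring the integral $\int_{B(x-y,r)}|U|\,dz$ under control via $K$ rather than $\hat k$; this is precisely where one needs the two distinct majorants $k\leq K$ to match the two scales (mean-value on $\hat k$ on the outside, comparison with $K$ on the inside) so that the resulting constant depends on $k$ only through its total mass $c_1$ and on $K$ only through its boundary value $K(r,0,\ldots,0)$.
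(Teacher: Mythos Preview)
Your proof is correct and follows essentially the same route as the paper. The paper packages the argument in convolution notation---writing $f=k\wedge k(r,0,\ldots,0)$ (your $\hat k$) and deriving the pointwise bound $k\le (1_{B(0,r)}K)*(\delta_0+f/c_2)$ before convolving with $|U|$---but the ingredients (local/tail split, Lemma~\ref{lem:1e} applied to the truncation, and the comparison $K\ge K(r,0,\ldots,0)$ on $B(0,r)$) are identical to yours.
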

\begin{proof}
Define
$f
(x)=k(x)
\land k
(r,0,\ldots,0)$.
Assume first that $0<c_2
<\infty$. By Lemma \ref{lem:1e},
\begin{align*}
k &\leq 1_{B(0,r)}K +f\le  1_{B(0,r)}K + 1_{B(0,r)}*f/|B(0,r/2)|\\
&\le  1_{B(0,r)}K +
( 1_{B(0,r)}K)*f/c_2.
\end{align*}
The inequality in the statement of the lemma follows from this, 
because
\begin{align*}
|U|* k \leq |U|* (1_{B(0,r)}K ) * \left( \delta_0 +f/c_2 \right)
 \leq 
 \sup \left[|U|* (1_{B(0,r)}K)\right]\left(1+c_1/c_2\right).
\end{align*}
If $c_2=0$, then the lemma follows immediately with $c_3=1$, since then $k\leq K= K 1_{B(0,r)}$. If $c_2=\infty$, then we have $K=\infty$ on $B(0,r)$ and the lemma is trivially true with $c_3=1$.
\end{proof}
\begin{proposition}\label{prop:nice}
If $c_0= c_0(d)=\frac{\Gamma(d/2-1)}{ 4\pi^{d/2}}$, $c>0$, $\tau>0$, $r>0$ and $U\colon\Rd\to\RR$, then
\begin{align*}
\sup_{s\in \RR,x\in\Rd} \int_s^{s+\tau} \int_{\Rd} g_c(s,x,u,z)|U(z)|\,dzdu
\leq \left( c\,c_0+ \frac{\tau}{r^2 |B(0,1/2)|} \right) I_{r}(U)\new{\,.}
\end{align*}
\end{proposition}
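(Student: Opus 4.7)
The plan is to recognize the double integral as a convolution and then invoke Lemma~\ref{lem:2e}. By translation invariance of $g_c$ in both time and space,
\[
\int_s^{s+\tau}\!\!\int_{\Rd}g_c(s,x,u,z)|U(z)|\,dz\,du=(|U|*k)(x),\qquad k(y):=\int_0^\tau g_c(0,0,u,y)\,du,
\]
and $k$ inherits the radial and radially decreasing structure from $g_c(0,0,u,\cdot)$. Taking $\sup_{x}$ thus reduces the proposition to an $L^\infty$ bound on $|U|*k$.

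To produce a majorant of $k$ matching the shape $|y|^{-(d-2)}$ dictated by $I_r$, the natural choice is the ``full-time'' Green-type kernel
\[
K(y):=\int_0^\infty g_c(0,0,u,y)\,du.
\]
A standard substitution $v=c|y|^2/(4u)$ reduces this to a $\Gamma$-integral and yields $K(y)=c\,c_0(d)/|y|^{d-2}$; in particular $0\le k\le K$ with $K$ radially decreasing. Directly from \eqref{def:I},
\[
\sup_{x\in\Rd}\int_{B(x,r)}|U(z)|K(x-z)\,dz=c\,c_0\,I_r(U).
\]

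The rest is bookkeeping. Applying Lemma~\ref{lem:2e} with this $(k,K)$ and radius $r$, Fubini together with the fact that each $g_c(0,0,u,\cdot)$ integrates to one gives $c_1=\tau$, and using $|B(0,r/2)|=r^d|B(0,1/2)|$ one finds $c_2=c\,c_0\,r^2\,|B(0,1/2)|$. Hence $c_3=1+\tau/(c\,c_0\,r^2\,|B(0,1/2)|)$, and the lemma delivers
\[
\sup_x(|U|*k)(x)\le c_3\cdot c\,c_0\,I_r(U)=\Bigl(c\,c_0+\frac{\tau}{r^2|B(0,1/2)|}\Bigr)I_r(U),
\]
which is exactly the claimed inequality. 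The only substantive step is the explicit $\Gamma$-integral evaluation of $K$; everything else is housekeeping of constants and a direct application of Lemma~\ref{lem:2e}.
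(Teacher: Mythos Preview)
Your proof is correct and follows essentially the same approach as the paper: define $k(y)=\int_0^\tau g_c(0,0,u,y)\,du$ and $K(y)=\int_0^\infty g_c(0,0,u,y)\,du=c\,c_0|y|^{2-d}$, then apply Lemma~\ref{lem:2e} with $c_1=\tau$ and $c_2=c\,c_0\,r^2|B(0,1/2)|$. You supply a bit more detail on the $\Gamma$-integral evaluation and the reduction to a convolution, but the argument is otherwise identical.
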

\begin{proof}
We let $k(x)=\int_0^{\tau}g_c(0,0,u,x)du$, $K(x)=\int_0^{\infty}g_c(0,0,u,x)du =  c\,c_0 |x|^{2-d}$  in Lemma \ref{lem:2e} and observe that
$c_1= \tau$ and $c_2= |B(0,r/2)|K(r,0,\ldots,0)=|B(0,1/2)|c\,\,c_0\, r^2$.
\end{proof}

\begin{proof}[Proof of \eqref{ecK}]
$\tau=h$ and $r=\sqrt{h}$ in Proposition \ref{prop:nice} yield the constant \eqref{ecK} in \eqref{eq:psup}. 
\end{proof}
By Theorem~\ref{thm:4P} \new{(replacing $\vee$ by $+$)} and Proposition \ref{prop:nice} with $\tau=t-s$ and $r=\sqrt{h}$, for 
$q:\Rd \to [0,\infty]$ we have
\begin{align}
\int_s^t \int_{\Rd} &g_b (s,x,u,z) q(z) g_a (u,z,t,y)\,dzdu \nonumber \\
&\leq  
\new{I_{\sqrt{h}}(q)}  
M \Big[b\,c_0+\frac{2(t-s)}{h |B(0,1/2)|} \Big]g_a (s,x,t,y)\,
\new{.} \label{explKato}
\end{align}
We are in a position to summarize part of our discussion as given by Theorem~\ref{thm:new}.
\begin{proof}[Proof of Theorem~\ref{thm:new}]
We consider \new{a} (Borel measurable) transition density $p$ on space-time, where the {\it space} is
$X=\Rd$ with 
the Lebesgue measure $dz$.
Let $0<a<b$ and $\sp(s,x,t,y)=e^{\lambda(t-s)}g_a(s,x,t,y)$. In view of \eqref{ineq:gbga} we may take $C=(b/a)^{d/2}\Lambda$ in \eqref{eq:pCtp}. If $q\in \pN(g_b,g_a,(b/a)^{d/2}, \eta,Q)$,
then $q\in\pN(p,\sp,C,\Lambda\eta,\Lambda Q)$, and the assertion follows from Theorem~\ref{thm:1}.
The inequality \eqref{explKato} means that
\new{a} time-independent $q$ is in $\in\pN(g_b,g_a,(b/a)^{d/2},\eta,Q)$ with 
\begin{align*}
\eta= b\, c_0 M I_{\sqrt{h}}(q)  
\,,\qquad
Q(s,t)=(t-s)\frac{2M}{h |B(0,1/2)|} I_{\sqrt{h}}(q)\,,  
\end{align*}
provided these are finite.
\end{proof}

We also observe the following characterization of $\pN(g_b,g_a,(b/a)^{d/2},\eta,Q)$.
\begin{corollary}\label{charN}
If $d\geq 3$, $0<a<b$ and
$q\colon\Rd\to[0,\infty]$, then $q\in \pN(g_b,g_a,(b/a)^{d/2},\eta,Q)$ for some $\eta$ and $Q$ if and only if $I_{\sqrt{h}}(q)<\infty$  
for some (hence for all) $h>0$.
\end{corollary}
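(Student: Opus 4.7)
The plan is to establish the two implications of the biconditional separately; the ``some $h$'' vs.\ ``all $h$'' equivalence will then fall out as a byproduct.

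For the easy direction (sufficiency), I would assume $I_{\sqrt{h}}(q)<\infty$ for some $h>0$ and simply invoke the second part of Theorem~\ref{thm:new}: it explicitly produces the required constants $\eta=b\,c_0(d)\,M\,I_{\sqrt{h}}(q)$ and $Q(s,t)=(t-s)\cdot 2M\,I_{\sqrt{h}}(q)/(|B(0,1/2)|\,h)$, both finite, with $Q$ linear in $t-s$ and hence regular superadditive.

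For the converse, assume $q\in\pN(g_b,g_a,(b/a)^{d/2},\eta,Q)$ with $Q(s,t)=\gamma(t-s)^+$ linear. The key trick is to integrate both sides of the defining inequality \eqref{def:coeNs} against $dy$ over $\Rd$. Since $g_a(u,z,t,\cdot)$ and $g_a(s,x,t,\cdot)$ are Gaussian densities, each integrates to $1$ in the variable $y$, so Fubini--Tonelli collapses the inequality to
\begin{equation*}
\int_s^t\!\int_{\Rd} g_b(s,x,u,z)\,q(z)\,dz\,du \;\le\; \eta+\gamma(t-s),
\end{equation*}
uniformly in $x\in\Rd$. Taking $t=s+h$ and a supremum over $(s,x)$, the lower bound in \eqref{eq:psup} with $c=b$ yields $C_0(d,b)\,I_{\sqrt{h}}(q)\le \eta+\gamma h<\infty$, and this is valid for \emph{every} $h>0$.

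Putting the two directions together, the ``some (all)'' biconditional is automatic: necessity produces finiteness of $I_{\sqrt{h}}(q)$ for every $h$ at once, whereas sufficiency consumes finiteness for only a single $h$, so both sides of the biconditional are insensitive to the choice of $h>0$. The main (and really only) subtle step is noticing that one can strip the second Gaussian from \eqref{def:coeNs} by averaging in $y$; once that is done, the already-proved two-sided Kato estimate \eqref{eq:psup} finishes the job without any further covering or patching argument.
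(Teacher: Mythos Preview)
Your proof is correct and follows essentially the same route as the paper: integrate \eqref{def:coeNs} in $y$ to strip the second Gaussian and invoke the lower bound in \eqref{eq:psup} for necessity, and use \eqref{explKato} (equivalently, the second part of Theorem~\ref{thm:new}) for sufficiency. The only cosmetic difference is that the paper writes the bound as $\eta+Q(0,h)$ without invoking linearity of $Q$, whereas you specialize to $Q(s,t)=\gamma(t-s)$; since the hypothesis already stipulates a linear $Q$, this is immaterial.
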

\begin{proof}
If $q\in\pN(g_b,g_a,(b/a)^{d/2},\eta,Q)$, then integrating \eqref{def:coeNs} in $y$, we obtain
$$
\sup_{x\in\Rd} \int_0^h \int_{\Rd} g_b (0,x,u,z) q(z)\,dzdu 
\leq \eta + Q(0,h) \,.
$$
By the lower bound of \eqref{eq:psup} we obtain
$$
I_{\sqrt{h}}(q) 
\le C_0^{-1}(d,b)\left(\eta+Q(0,h)\right)<\infty.
$$
The converse implication follows from \eqref{explKato}, which also shows that $Q$ may be taken linear.
\end{proof}

We now pass to
{\it \new{a} parabolic Kato condition}.
{For $c>0$, $h>0$ and $V: \RR\times \Rd\to \RR$ we denote
$$
N_h^c (V) \!= \sup_{s, x} \int_s^{s+h} \!\! \int_{\Rd} \!\! g_c(s,x,u,z)|V(u,z)|dzdu
+ \sup_{t, y}  \int_{t-h}^t\!  \int_{\Rd}\!\!  g_c(u,z,t,y)|V(u,z)|dzdu.
$$
We say that $V$ is of 
{\it parabolic} Kato class if $\lim_{h \to 0^+} N_h^c (V)=0$ for every $c>0$. 
Considering $V(s,x)=U(x)$ for $s\in \RR$, $x\in \Rd$, we may 
regard the parabolic Kato class as wider than 
the (time-independent) Kato class.}
We note that $N_h^c (V)$ is non-decreasing in $h$. 
\new{Obviously,}
$N_{h_1+h_2}^c (V) \leq N_{h_1}^c (V) + N_{h_2}^c (V)$, hence
\begin{align}\label{step:2}
N_{t-s}^c (V)\leq N_h^c (V) + N_h^c (V) (t-s)/h, \quad h>0\,.
\end{align} 
To focus on nonnegative Schr\"odinger perturbations (in this connection see Remark~\ref{sper}), we consider, as before\new{, a} function $q\ge 0$ on space-time.
If $0<a<b<\infty$, then by \new{the} 4G Theorem,  there 
is an explicit constant $M'$ depending only on $d$ and $b/a$, such that for all $s<t$ and $x,y \in \Rd$,
\begin{align}\label{step:1}
\int_s^t \int_{\Rd} g_b (s,x,u,z) q(u,z) g_a (u,z,t,y)\,dzdu \leq M' N_{t-s}^c (q)\; g_a (s,x,t,y)\,,
\end{align}
where $c=(b-a)\land a$. In fact, we may take $M'=\left(\frac{b-a}{a} \vee \frac{a}{b-a}\right)^{d/2}M$, where $M$ is the constant in Theorem~\ref{thm:4P}.
If $q\ge 0$ belongs to the parabolic Kato class and $0<a<b<\infty$, 
then by (\ref{step:2}) and (\ref{step:1}) we have 
\begin{equation}\label{pq}
q\in \pN(g_b,g_a,(b/a)^{d/2},\eta,Q),
\end{equation}
with 
$Q(s,t)= \beta (t-s)$
and $\beta = \eta /h$, 
provided 
$h>0$ and $\eta>0$ are such that $N_h^{(b-a)\land a} (q)\leq \eta/M'$. Then, in view toward applying Theorem~\ref{thm:1}, we are free to choose arbitrarily small $\eta>0$ in \eqref{pq}, at the expense of having large $\beta$.

\begin{remark}\label{pdK}
The condition $q\in \pN(g_b,g_a,(b/a)^{d/2}, \eta, Q)$ invites a trade-off between $\eta$ and $Q$.
In particular, it follows from the discussion of \eqref{explKato} and \eqref{pq} that
arbitrarily small $\eta>0$ and a linear, possibly large, but explicit $Q$ exist
if $q$ is in the Kato class or the parabolic Kato class.
\end{remark}

\begin{remark}\label{rK}
The (time-independent) Kato class was first used to perturb the Laplace operator by Aizenman and Simon \cite{MR644024}, and was characterized 
as smallness with respect to the Laplacian on $L^1(\Rd)$. The parabolic Kato class was proposed for the Gaussian kernel by Zhang in \cite{MR1488344}. It was then generalized and used by Liskevich and Semenov
\cite{MR1783642}, Liskevich, Vogt and Voigt
\cite{MR2253015} and Gulisashvili and van Casteren \cite{MR2253111}. The condition is related to Miyadera perturbations of the semigroup of the Laplacian on $L^1(\Rd)$, see Schnaubelt and Voigt \cite{MR1687500}.
\new{The time-independent} Kato class is wider than $L^{p}(\Rd)$ if $p>d/2$
\cite{MR644024}, \cite[Chapter 3., Example 2]{MR1329992}.
Nevertheless, the latter space is quite 
natural for perturbing Gaussian kernels, see Aronson \cite{MR0435594}, Dziuba{\'n}ski and Zienkiewicz \cite{MR2164260}, \cite[Remark~1.1(b)]{MR1978999}. Another Kato-type condition was introduced by Zhang in \cite{MR1978999} to obtain strict comparability of $g$ and $\tilde{g}$. 
As noted in \cite[Remark~1.1(c)]{MR1978999}, the condition may be formulated in terms of {\it Brownian bridges}.
This point of view was later developed in \cite{MR2457489} (under the name of the relative Kato condition) and elaborated
in \cite{MR3000465} to 
\begin{equation}\label{eq:kk_2}
\int_s^t\int_X \frac{p(s,x,u,z)p(u,z,t,y)}{p(s,x,t,y)}q(u,z)dzdu\leq [\eta +Q(s,t)],
\end{equation} 
where $s<t$, $x,y \in X$, $\eta<\infty$, and $0\le Q(s,u)+Q(u,t)\le Q(s,t)$, cf. \eqref{eq:kk_1}.
Condition \eqref{eq:kk_2} 
indicates why we mention bridges here (see \cite{MR2457489} for details).
The Kato condition for bridges gives better upper bounds and seems more intrinsic to Schr\"odinger perturbations than the parabolic Kato condition, but the former may be cumbersome to verify in 
concrete situations. 
For the classical Gaussian kernel, \eqref{eq:kk_2} 
is stronger than the corresponding parabolic Kato condition with a fixed $c$ (see \cite[Lemma~9]{MR2457489} for a more general result), 
and it is rather difficult to explicitly characterize \cite[Remark~1.2(a,b)]{MR1978999}.
 This is due to the relatively large values of the 
integrand in \eqref{eq:kk_2} for $(u,z)$ on the interval connecting $(s,x)$ and $(t,y)$.
If $p$ satisfies \new{the} 3G inequality, $p(s,x,t,y)=p(s,y,t,x)$ and $p$ is a probability transition density,
then the parabolic Kato class and 
the Kato class for bridges 
\new{coincide}. This is the case for the transition density of the fractional Laplacian $\Delta^{\alpha/2}$ with $\alpha\in (0,2)$ \cite[Corollary~11]{MR2457489}, and the proof of this fact is similar to our application of 4G in \eqref{step:1}.
We emphasize
that each transition density $p$ determines its specific Kato classes (either parabolic or for bridges), and detailed analysis is required to 
manage particularly singular $q$. 
\end{remark}

Let $d\geq 3$, $z_1 \in \Rd$, $|z_1|=1$, and $B(nz_1,1/n)\subset \Rd$ be the ball with radius $1/n$ and center $nz_1$, $n=1,2,\ldots$. We define
$$U(z)=\sum_{n=2}^{\infty} n|z-nz_1|^{-1} \chiI_{B(nz_1,1/n)}(z),\quad z\in \Rd.$$
(A similar example appears in \cite[Appendix 1]{MR644024}.)  If $\delta>0$ and $n\geq 1/\delta$, then
\begin{align*}
I_{\delta}(U) \geq \int_{B(nz_1, 1/n)} n |z-nz_1|^{-d+1}\,dz =\int_{B(0,1)}|z|^{-d+1}\,dz\,.
\end{align*}
Therefore $\varepsilon U$ does not belong to the parabolic Kato class for any   
$\varepsilon>0$.
On the other hand, $I_1(U)<\infty$, and by \eqref{explKato}, {$\varepsilon U \in \pN(g_b,g_a,(b/a)^{d/2},\eta,Q)$}   
with a {\it linear} $Q$ and small $\eta$, provided $\varepsilon$ is sufficiently small, cf. Corollary~\ref{charN} and Theorem~\ref{thm:1}.
A similar effect may be obtained for the original $U$ if we instead make $b$ smaller while keeping $b/a$ constant. 
Since our constant in \eqref{explKato} is explicit, so are the resulting upper 
bounds for $\tilde{g}_b$.
Similar conclusions obtain in the generality of Theorem~\ref{thm:new}, and applications to eventual estimates of transition densities are presented below.

To summarize this discussion of examples of $q$ manageable by our methods, we let $q(u,z):=U(z)+
\varphi(u,z)$, where $\varphi \ge 0$, $U$ is as above, and $f(u)=\sup_{z\in \Rd} \varphi(u,z)$ is finite and increases to infinity as $u\to \infty$. Such $q$ requires $\eta>0$ to control $U$, and a {\it superlinear} 
$Q$ to majorize $\int_s^t f(u)\,du$ (cf. the discussion after the proof of Theorem \ref{thm:1}).

\vspace{10pt}

\begin{example}\label{exLap}
\begin{rm}
If $p=g_b$ and $p^*=g_a$, then we 
take $\Lambda=1$, $C=(b/a)^{d/2}$ and $\lambda=0$  in  \eqref{eq:Gub} and, consequently, in  
\eqref{eq:geKa}.
For clarity, $a$, the coefficient in the exponent of the Gaussian majorant, may be arbitrarily close to $b$, at the expense of the factor before the majorant in \eqref{eq:geKa},
and we require that  $q\in \pN(g_b,g_a,C, \eta,Q)$ with 
$\eta\in [0, 1)$,
which is
satisfied, e.g., for $q$ in 
the Kato class.
We thus
recover the best results known in this setting \cite[proof of Theorem A]{MR1488344} with explicit control of constants.\\
To relate our results to second order differential operators, we let $C_c^{\infty}(\RR\times\Rd)$ denote the smooth compactly supported 
functions on space-time, and we recall that for all $s\in \RR$, $x\in\Rd$ and $\phi \in C_c^{\infty}(\RR\times\Rd)$, 
\begin{align}\label{fsb}
\int_{-\infty}^{\infty} \int_{\Rd} p(s,x,u,z)
\left[\frac{\partial \phi(u,z)}{\partial u}+\frac1b\Delta \phi(u,z)\right]\,dzdu=-\phi(s,x)\,.
\end{align}
The identity may, for instance, be obtained from integration by parts or by using Fourier transform in the space variable. 
By a general result, \cite[Lemma~4]{MR3000465},  
the perturbed transition density $\tp$ corresponds to the Schr{\"o}dinger-type operator 
$\frac1b\Delta + q$ in the same way,
\begin{align*}
\int_s^{\infty} \int_{\Rd} \tp(s,x,u,z)
\left[\frac{\partial \phi(u,z)}{\partial u}+\frac1b\Delta \phi(u,z)
+q(u,z)\phi(u,z)
\right]\,dzdu=-\phi(s,x)\,,
\end{align*}
provided $q\in \pN(g_b,g_a,C, \eta,Q)$ with 
$\eta\in [0, 1)$, as above.
\end{rm}
\end{example}

\begin{example}
\begin{rm}
Let $p$ be the transition density of the one-dimensional Brownian motion with constant unit drift, 
\begin{align*}
p(s,x,t,y) = g_1 (s,x-s,t,y-t)\,,\qquad s<t,\;x,y\in \RR\,. 
\end{align*}
There are no constants $c_1, c_2$ such that $p(s,x,t,y)\leq c_1 g_{c_2}(s,x,t,y)$ for all $s<t$ and $x,y\in\RR$ (cf. Zhang \cite[Remark~1.3]{MR1457736}). On the other hand, for each $b\in (0,1)$ we have
\begin{align*}
p(s,x,t,y)\leq b^{-1/2}e^{\frac{b}{4(1-b)}(t-s)} g_b(s,x,t,y)\,, \qquad s<t,\;x,y\in \RR\,. 
\end{align*}
This shows why we may need $\lambda\neq 0$ in \eqref{eq:Gub}, see also Norris \cite{MR1482931}. 
\end{rm}
\end{example}

\begin{example}\label{exD}
\begin{rm}
Let 
\new{$f:\RR\times\Rd\mapsto \RR$ be a function of $(s,x)\in \RR\times\Rd$} and 
\begin{align*}
L f = \sum_{i,j=1}^n a_{ij}(s,x)\frac{\partial^2f}{\partial x_i\partial x_j}+
\sum_{i=1}^n b_i(s,x)\frac{\partial f}{\partial x_i}\,,
\end{align*}
be 
a uniformly elliptic operator, with bounded uniformly H\"older continuous coefficients $b_i$ and $a_{ij}=a_{ji}$,  see Dynkin \cite[Chapter 2, 1.1.A and 1.1.B]{MR1883198}  for detailed definitions, and \cite[Chapter~1]{MR0181836} for \new{a} wider perspective.
Consider the fundamental solution $p(s,x,t,y)$ in the sense of \cite[Theorem~1.1]{MR1883198} for the following   
\new{parabolic differential} operator
\begin{equation}\label{cpo}
\frac{\partial f}{\partial s} + Lf\,.
\end{equation}
By the results of \cite[ Chapter 2]{MR1883198}, in particular  Theorem 1.1, 1.3.1 and 1.3.3, $p$ satisfies our assumptions, including \eqref{eq:Gub} with $-\infty<t_1\le s<t \le t_2<\infty$.
Therefore the Schr\"odinger perturbation $\tilde p$ of  $p$ 
satisfies (\ref{eq:geKa}) 
in 
the (finite) time 
horizon $[t_1,t_2]$, if $q\in \pN(g_b,g_a,(b/a)^{d/2}, \eta,Q)$ and 
$\eta\in [0,1/\Lambda)$,
as explained 
 after the proof of Theorem~\ref{thm:1}.
We thus recover recent results of \cite[Theorem~3.10 and 3.12]{MR2253015} (see also Remark~\ref{rfs} below).
We now explain how 
$p$ and $\tilde p$ are related to 
parabolic operators.
For all $s\in \RR$, $x\in\Rd$ and $\phi \in C_c^{\infty}(\RR\times\Rd)$, we have
\begin{align}\label{rfund}
\int_s^{\infty} \int_{\Rd} p(s,x,u,z)
\left[
\frac{\partial \phi(u,z)}
{\partial u}
+L\phi(u,z)\right]\,dzdu=-\phi(s,x)\,.
\end{align}
In fact, the 
identity holds if the function $\phi(s,x)$ is bounded, supported in a finite time interval and 
uniformly H\"older continuous in $x$, and if the same 
is true for
its first derivative in time and 
all its
derivatives up to the second order in space.
Indeed, if we let
\begin{align*}
h(s,x)=  \phi(s,x) + \int_s^{\infty}\int_{\Rd} p(s,x,u,z)\left[ \frac{\partial \phi (u,z)}{\partial u}+ L\phi(u,z)\right]\,dzdu\,,
\end{align*}
then by \cite[Chapter 2, 1.3.3 ]{MR1883198} we have $h\equiv 0$, which verifies \eqref{rfund}.
By \eqref{rfund}
and \cite[Lemma~4]{MR3000465}, 
the perturbed transition density $\tp$ corresponds to the Schr{\"o}dinger-type operator 
$L + q$ in a similar way: for all $s\in \RR$, $x\in\Rd$ and $\phi \in C_c^{\infty}(\RR\times\Rd)$,
\begin{align*}
\int_s^{\infty} \int_{\Rd} \tp (s,x,u,z) \left[ \frac{\partial \phi(u,z)}{\partial u} + L\phi(u,z)+ q(u,z)\phi(u,z)\right] \,dzdu=-\phi(s,x)\,.
\end{align*}
\end{rm}
\end{example}

\begin{remark}\label{rfs}
\rm In this paper by a fundamental solution we mean the negative of an integral inverse of a given operator acting
on space-time (other authors also use the terms heat kernel and Green function).
More specifically, our $p$ and $\tilde p$ 
may be considered 
post-inverses of the respective differential operators
acting on $C^\infty_c(\RR\times \Rd)$, cf. Example~\ref{exLap} above and
\cite{MR1978999}, \cite[p. 13]{MR2320609}.
In the literature on 
partial differential equations it is 
also common to consider 
the 
pre-inverses, which necessitate sufficient differentiability of the applications of $p$, $\tp$ to test functions  \cite[Theorem I.5.9]{MR0181836}, \cite[(1.12)]{MR1883198}. 
Still differently, if 
the operator $L$ is in the divergence form, a common
notion is that of the weak fundamental solution, related to integration by parts, see
Aronson \cite[(2), (8), Section 6 and 7]{MR0435594}, 
Cho, Kim and Park
\cite{MR2853532} and Liskevich and Semenov
\cite{MR1783642}.
It is also 
customary to 
study the operator $\partial f/\partial s - Lf$, which 
is related to \eqref{cpo}  
by time reversal $s\mapsto -s$ \cite{MR1883198,MR2253111}, \cite[(7.3)]{MR0435594}.
The setting of \eqref{cpo} and \eqref{rfund}
is most appropriate from the probabilistic point of view: it agrees with time precedence and notation for (measurable) transition densities of Markov processes, which may be conveniently considered as integral operators on space-time.
\end{remark}
\begin{remark}
\rm
As we already mentioned, Zhang \cite{MR1978999} gives sharp estimates for perturbations of $p=g$. Sharp Gaussian estimates (corresponding to $p^*=p$) are generally not available by our methods if (plain) Kato condition and 4G should be used to estimate $p_1$.
Accordingly,
\cite{MR1978999} assumes an integral condition related to the Brownian bridge, 
to bound $p_1$ by $p$. 
As we explained 
in Remark~\ref{rK},  the Kato condition for bridges is more restrictive than the parabolic Kato condition (a straightforward general approach using bridges is 
given in \cite{MR3000465}).
\end{remark}

We now comment on the uniqueness of $\tilde p$. 
Trivially, $\tilde p$ is unique because it is given by \eqref{def:tp}, rather than implicitly.   
However, in the literature of the subject a departure point for defining $\tilde p$ is usually one of the following Duhamel's (perturbation/resolvent) formulas,
\begin{equation}\label{Df}
\tilde p(s,x,t,y)=p(s,x,t,y)+\int_s^t \int_\Rd p(s,x,u,z)q(u,z)\tp(u,z,t,y)dudz,
\end{equation}
\begin{equation}\label{Df1}
\tilde p(s,x,t,y)=p(s,x,t,y)+\int_s^t \int_\Rd \tp(s,x,u,z)q(u,z)p(u,z,t,y)dudz.
\end{equation}
In short, $\tp=p+pq\tp$ {\it or} $\tp=p+\tp qp$, depending on whether we consider $p$ and $\tp$ as pre- {\it or} post-inverses, respectively, of the corresponding differential operators (see \cite{MR3000465,2011-BHJ} for notation related to integral kernels).
Clearly, \eqref{def:tp} yields \eqref{Df} and \eqref{Df1}.
Conversely, iterating \eqref{Df} or \eqref{Df1} we 
get \eqref{def:tp}, and uniqueness, if 
$(pq)^n\tp$ or $\tp (qp)^n$ 
converge to zero as $n\to \infty$. So is the case with $(pq)^n\tp$ under the assumptions of Theorem~\ref{thm:1}, {\it provided} 
$\tilde p$ is locally in time majorized by a constant multiple of $p^*$, because then $(pq)^np^*\to 0$.
We refer to \cite[Theorem~2]{MR2457489} and \cite[Theorem~1.16]{MR2253015} for further discussion of the perturbation formula and uniqueness. 
We also note that in the setting of {\it bridges}, there is a {natural} probabilistic Feynman-Kac type formula for $\tp$ \cite[Section~6]{MR2457489}, which readily yields \eqref{def:tp} and uniqueness. 

Here is a general
argument leading to \eqref{Df1}. We consider $-p$ and $-\tp$ as integral operators on space-time and 
post-inverses of operators $\mathcal{L}$ and $\mathcal{L}+q$, respectively, which in turn act on the same given set of functions. We have  
$\tp (\mathcal{L}\phi+q\phi)=-\phi=p\mathcal{L}\phi$, hence $\tp \psi=p\psi+\tilde p q p\psi$, 
where $\psi=\mathcal{L}\phi$. If the range of $\mathcal{L}$ uniquely determines measures, then we 
obtain $\tp=p+\tp q p$ as integral kernels.
This is the case, e.g., in the context of Example~\ref{exD}.
We finally note that some majorization of $\tp$ is needed for uniqueness. For instance, both $p=g_b$ and $p(s,x,u,z)=g_b(s,x,u,z)+2du+b |z|^2$ satisfy \eqref{fsb}.

\section{Miscellanea}\label{sec:Misc}

Earlier work by Jakubowski \cite{MR2507445} and coauthors \cite{MR3000465} in slightly different settings does not require continuity assumptions on $Q$. Namely
we call $Q \colon \RR \times \RR \to [0,\infty)$ {\it superadditive}, if
\begin{equation}\label{ineq:Qp}
Q(s,u) + Q(u,t) \leq Q(s,t)\,, \qquad \mbox{for}\quad s< u< t \,.
\end{equation}
For convenience we define $Q(s,t)=0$ if $s\geq t$.
We see that $t\mapsto Q(s,t)$ is non-decreasing, $s\mapsto Q(s,t)$ is non-increasing, 
and the following limit exists,
\begin{equation}\label{eq:defQ-p}
Q^-(s,t)=\lim_{h\to 0^+}Q (s+h,t-h)\,.
\end{equation}
For instance, if $\mu$ is a Radon measure on $\RR$ and $Q(s,t)=\mu(\{u\in \RR: s\le u <t\})$, then $Q^-(s,t)=\mu(\{u\in \RR: s< u <t\})$.

Clearly, $0\leq Q^-(s,t)\leq Q(s,t)$ and $Q^-(s,u)+Q^-(u,t)\leq Q^-(s,t)$ if $s\leq u \leq t$. We have $Q^{--}=Q^-$. In fact, $Q^-(u,v)\to Q^-(s,t)$ as $u\to s^+$, $v\to t^-$, because if $0<h< u-s< k$ and $h< t-v < k$, then 
$Q(s+k,t-k)\leq Q^-(u,v)\leq Q(s+h,t-h)$.  
In particular, $s\mapsto Q^-(s,t)$ is right-continuous and $t\mapsto Q^-(s,t)$ is left-continuous. 
We thus obtain the following result.
\begin{corollary}
$Q^-(s,t)$ is 
regular superadditive.
\end{corollary}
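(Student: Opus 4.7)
The plan is to verify, in turn, each clause of the definition of \emph{regular superadditive} for $Q^-$, relying almost entirely on facts already assembled in the paragraph immediately preceding the corollary.

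First, nonnegativity and the vanishing condition $Q^-(s,t)=0$ for $s\geq t$ are immediate from the already stated sandwich $0\le Q^-(s,t)\le Q(s,t)$ together with $Q(s,t)=0$ in that range. Second, superadditivity $Q^-(s,u)+Q^-(u,t)\le Q^-(s,t)$ for $s\le u\le t$ is also stated explicitly; it is obtained by applying superadditivity of $Q$ to the three subintervals $[s+h,u-h]$, $[u-h,u+h]$, $[u+h,t-h]$, discarding the nonnegative middle term $Q(u-h,u+h)$, and letting $h\to 0^+$.

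Third, continuity. The text asserts right-continuity in $s$ and left-continuity in $t$ ``in particular'' as a consequence of the joint limit $Q^-(u,v)\to Q^-(s,t)$ with $u\to s^+$ and $v\to t^-$, but since that joint limit is taken with $v<t$ strictly, one short additional step closes the gap. For right-continuity of $s\mapsto Q^-(s,t)$ I would argue directly: monotonicity inherited from $Q$ gives $Q^-(u,t)\le Q^-(s,t)$ whenever $u>s$. For the matching lower bound, given $\eta>0$ choose $h_0>0$ with $Q(s+h_0,t-h_0)>Q^-(s,t)-\eta$; then for $u\in(s,s+h_0)$ the choice $\varepsilon=h_0-(u-s)>0$ yields
\[
Q^-(u,t)\ge Q(u+\varepsilon,t-\varepsilon)=Q\bigl(s+h_0,\,t-h_0+(u-s)\bigr)\ge Q(s+h_0,t-h_0)>Q^-(s,t)-\eta,
\]
where the last $\ge$ uses monotonicity of $Q$ in its second variable. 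Sending $\eta\to 0^+$ delivers right-continuity of $s\mapsto Q^-(s,t)$; left-continuity of $t\mapsto Q^-(s,t)$ is entirely symmetric.

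No step presents a genuine obstacle; everything of substance has been done in the preceding paragraph. The only mild subtlety is that the joint continuity statement does not specialize cleanly when one coordinate is held fixed at the boundary value, and that gap is closed by the explicit $\varepsilon$-choice displayed above.
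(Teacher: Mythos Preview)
Your proposal is correct and follows the same approach as the paper: the corollary has no separate proof there, being stated as an immediate consequence of the properties of $Q^-$ assembled in the paragraph just before it. Your explicit $\varepsilon$-argument for the one-sided continuity is a welcome clarification of the paper's ``in particular'' (which tacitly uses $Q^-(u,t)\ge Q^-(u,v)$ for $v<t$ to pass from the joint limit to the single-variable limit), but it does not constitute a different route.
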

We note that continuous superadditive functions are used in the theory of rough paths by Lyons \cite{MR2314753}. There are further similarities due to the role of iterated integrals in time here and in \cite{MR2314753}, and many differences related to the fact that we require absolute integrability (but see \cite{MR2875353}) and also integrate/average in space (see \eqref{def:coeNs} and \eqref{defpn}).
We also note that methods similar to ours allow to handle gradient perturbations, which will be discussed in a forthcoming paper (see also \cite{MR2876511,MR2875353}).

The next
result shows that if $Q$ is (plain) superadditive, then
the 
factor $\eta+Q(s,t)$ in (\ref{def:coeNs}) may be replaced with $C\eta+CQ^-(s,t)$, where $C$ comes from \eqref{eq:pCtp}, and $CQ^-$ is regular superadditive.
Thus, we may assure {\it regular} superadditivity at the expense of increasing $\eta$ and $Q$.
\begin{lemma} \label{lem:3}
Assume that $p$ and $\sp$ are transition densities, function $q\geq 0$ is defined (and measurable) on space-time, $\eta\geq 0$, $C\geq 1$, $Q$ is superadditive, and {\rm (\ref{eq:pCtp})} and  {\rm (\ref{def:coeNs})} hold.
Then for all $s<t$ and $x,y\in X$, we have
\begin{equation*}
\int_s^t \int_{X} p(s,x,u,z)q(u,z) \sp(u,z,t,y)\,dz\,du \leq C\big[ \eta + Q^-(s,t)\big] \sp(s,x,t,y)\,.
\end{equation*}
\end{lemma}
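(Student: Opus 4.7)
The plan is to shrink the time interval $(s,t)$ to $(s+h,t-h)$ for $h>0$, apply \eqref{def:coeNs} on the shrunken interval, glue the endpoints back in via Chapman--Kolmogorov, absorb a factor of $C$ through \eqref{eq:pCtp}, and let $h\downarrow 0$.

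More precisely, fix $s<t$ and $x,y\in X$, and take $h>0$ with $s+h<t-h$. Using Chapman--Kolmogorov \eqref{assume1} for $p$ at time $s+h$ and for $\sp$ at time $t-h$, we rewrite
\begin{align*}
&\int_{s+h}^{t-h}\!\!\int_X p(s,x,u,z)\,q(u,z)\,\sp(u,z,t,y)\,dz\,du\\
&\quad=\int_X\!\!\int_X p(s,x,s+h,x')\,\sp(t-h,y',t,y)\\
&\qquad\qquad\times\left[\int_{s+h}^{t-h}\!\!\int_X p(s+h,x',u,z)\,q(u,z)\,\sp(u,z,t-h,y')\,dz\,du\right]dx'\,dy',
\end{align*}
where the interchange of integrals is legitimate by Fubini--Tonelli since all integrands are nonnegative and measurable.

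Applying the hypothesis \eqref{def:coeNs} on $(s+h,t-h)$ with starting point $x'$ and endpoint $y'$, the bracketed inner integral is majorized by $[\eta+Q(s+h,t-h)]\sp(s+h,x',t-h,y')$. Plugging this in and then using \eqref{eq:pCtp} to estimate $p(s,x,s+h,x')\leq C\sp(s,x,s+h,x')$, we collapse the two remaining integrations by Chapman--Kolmogorov for $\sp$:
\begin{align*}
&\int_{s+h}^{t-h}\!\!\int_X p(s,x,u,z)\,q(u,z)\,\sp(u,z,t,y)\,dz\,du\\
&\quad\leq C\bigl[\eta+Q(s+h,t-h)\bigr]\int_X\!\!\int_X \sp(s,x,s+h,x')\sp(s+h,x',t-h,y')\sp(t-h,y',t,y)\,dx'dy'\\
&\quad=C\bigl[\eta+Q(s+h,t-h)\bigr]\,\sp(s,x,t,y).
\end{align*}

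Finally, the integrand on the left is nonnegative, so monotone convergence yields
\[
\int_{s+h}^{t-h}\!\!\int_X p(s,x,u,z)q(u,z)\sp(u,z,t,y)\,dz\,du\ \longrightarrow\ \int_s^t\!\!\int_X p(s,x,u,z)q(u,z)\sp(u,z,t,y)\,dz\,du
\]
as $h\downarrow 0$, while $Q(s+h,t-h)\to Q^-(s,t)$ by \eqref{eq:defQ-p}. The claimed inequality follows. The only delicate point is arranging the Chapman--Kolmogorov splittings so that the auxiliary density $\sp$ survives on both sides of the inner integral (which forces us to pay the factor $C$ once, on the outer $p$ piece); I do not anticipate any serious technical obstacle beyond keeping careful track of which density is used where.
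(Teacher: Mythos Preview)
Your proof is correct and follows essentially the same route as the paper: shrink to $(s+h,t-h)$, split $p$ and $\sp$ via Chapman--Kolmogorov at the new endpoints, apply \eqref{def:coeNs} on the inner integral, pay the factor $C$ once through \eqref{eq:pCtp}, collapse back by Chapman--Kolmogorov for $\sp$, and pass to the limit using monotone convergence and the definition of $Q^-$. The only differences are cosmetic (variable names and the order in which the two Chapman--Kolmogorov collapses are written out).
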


\begin{proof}
\new{For $x,y\in X$, $s<t$, $0<h<(t-s)/2$, taking $\gamma=\eta +Q(s+h,t-h)$, by Chapman-Kolmogorov and \eqref{def:coeNs}, we get
\begin{align*}
&\int_{s+h}^{t-h} \int_X p(s,x,u,z)q(u,z)\sp(u,z,t,y)\, dzdu 
=\int_X \int_X p(s,x,s+h,v) \\ 
&\int_{s+h}^{t-h} \int_X p(s+h,v,u,z) q(u,z)
\sp(u,z,t-h,w)dzdu\, \sp(t-h,w,t,y)\, dw dv\\
&\le \gamma \int_X \int_Xp(s,x,s+h,v) \sp(s+h,v,t-h,w) \sp(t-h,w,t,y)\, dw dv\,.
\end{align*}
Again by Chapman-Kolmogorov, the above equals
\begin{align}
\big[ \eta +Q(s+h,t-h) \big] \int_X p(s,x,s+h,v) \sp(s+h,v,t,y)\,dv \,,\label{eq:Qpsp}
\end{align}
which
leads to the upper bound $C \big[ \eta +Q(s+h,t-h) \big]\sp(s,x,t,y)$, by \eqref{eq:pCtp} and Chapman-Kolmogorov. We then let $h\to 0^+$, and use \eqref{eq:defQ-p} and the monotone convergence theorem, ending the proof.
}
\end{proof}

If $p^*$ is a time-changed $p$, then
we can do even better.
\begin{lemma} \label{lem:QQ}
Under the assumptions of Lemma~\ref{lem:3} we have 
\begin{equation*}
\int_s^t \int_{X} p(s,x,u,z)q(u,z) \sp(u,z,t,y)\,dz\,du \leq \big[ \eta + Q^-(s,t)\big] \sp(s,x,t,y)\,,
\end{equation*}
if  
\new{$t\mapsto p(s,x,t,y)$, $t\in(s,\infty)$,} is continuous, $p$ is time-homogeneous: $p(s,x,t,y)=p(s+r,x,t+r,y)$ for $r\in \RR$,  and $\sp(s,x,t,y)=p(as,x,at,y)$ for some $a>0$.
\end{lemma}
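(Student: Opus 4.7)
The plan is to adapt the proof of Lemma~\ref{lem:3}, with one crucial modification that removes the factor $C$: the residual integral which there forced the use of $p\leq Cp^*$ can here be evaluated exactly, via time-homogeneity together with the scaling $p^*(s,x,t,y)=p(as,x,at,y)$.

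First I would fix $x,y\in X$, $s<t$, and $0<h<(t-s)/2$, and reproduce the calculation of Lemma~\ref{lem:3}: expand $p(s,x,u,z)$ by Chapman-Kolmogorov for $p$ at time $s+h$ and $p^*(u,z,t,y)$ by Chapman-Kolmogorov for $p^*$ at time $t-h$ (both valid for $u\in(s+h,t-h)$), apply Fubini-Tonelli and the hypothesis \eqref{def:coeNs} on the sub-interval $[s+h,t-h]$, and collapse one inner integration by Chapman-Kolmogorov for $p^*$. This yields
\begin{equation*}
\int_{s+h}^{t-h}\!\!\int_X p(s,x,u,z)\,q(u,z)\,p^*(u,z,t,y)\,dzdu \leq [\eta+Q(s+h,t-h)]\!\int_X\! p(s,x,s+h,v)\,p^*(s+h,v,t,y)\,dv.
\end{equation*}

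The key new step is to evaluate the remaining integral exactly. Using $p^*(s+h,v,t,y)=p(a(s+h),v,at,y)$ and shifting the arguments of $p(s,x,s+h,v)$ by $(a-1)(s+h)$ via time-homogeneity, one obtains $p(s,x,s+h,v)=p(as+(a-1)h,x,a(s+h),v)$; the intermediate time $a(s+h)$ now matches on both factors, so Chapman-Kolmogorov for $p$ collapses the integral to
\begin{equation*}
\int_X p(s,x,s+h,v)\,p^*(s+h,v,t,y)\,dv \;=\; p(as+(a-1)h,x,at,y) \;=\; p^*\!\left(s+\tfrac{(a-1)h}{a},\,x,\,t,\,y\right),
\end{equation*}
with no factor $C$ incurred. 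Finally, letting $h\to 0^+$: the left-hand side of the main estimate increases to $\int_s^t\int_X p\,q\,p^*\,dzdu$ by monotone convergence; $Q(s+h,t-h)\to Q^-(s,t)$ by \eqref{eq:defQ-p}; and, by time-homogeneity, $p^*(s+(a-1)h/a,x,t,y)=p(as,x,at-(a-1)h,y)$, which tends to $p^*(s,x,t,y)$ by the assumed continuity of $t\mapsto p(s,x,t,y)$ at $t'=at$. Combining these three limits gives the asserted bound.

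The main obstacle, and the place where all three extra assumptions are spent at once, is this exact alignment of time parameters: the scaling $p^*(\cdot,\cdot,\cdot,\cdot)=p(a\cdot,\cdot,a\cdot,\cdot)$ combined with time-homogeneity is precisely what makes the intermediate time $a(s+h)$ appear simultaneously as the ending time of the shifted $p(s,x,s+h,v)$ and as the starting time of $p^*(s+h,v,t,y)$, so that Chapman-Kolmogorov for $p$ applies directly to give an honest equality. Without such alignment one must revert to the pointwise inequality $p\leq Cp^*$ followed by Chapman-Kolmogorov for $p^*$, which is exactly the source of the factor $C$ in Lemma~\ref{lem:3}.
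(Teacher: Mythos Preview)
Your proof is correct and follows the same approach as the paper: pick up the intermediate estimate \eqref{eq:Qpsp} from the proof of Lemma~\ref{lem:3}, use time-homogeneity together with the scaling $\sp(s,x,t,y)=p(as,x,at,y)$ to align the intermediate time so that Chapman--Kolmogorov for $p$ collapses the residual integral $\int_X p(s,x,s+h,v)\,\sp(s+h,v,t,y)\,dv$ exactly (avoiding the factor $C$), and then pass to the limit $h\to 0^+$ via monotone convergence, \eqref{eq:defQ-p}, and the continuity hypothesis. The paper's displayed computation places the split at $t-h$ rather than at $s+h$, but the underlying mechanism is identical; your write-up tracks \eqref{eq:Qpsp} literally and is arguably cleaner.
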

\begin{proof}
Picking up \eqref{eq:Qpsp}, for $s<t$, $x,y\in\Rd$, we have
\new{\begin{align*}
&\limsup_{h\to 0^+} \int_{X} p (s,x,s+h,v) \sp (s+h,v, t, y)\,dv \\ 
&= \limsup_{h\to 0^+} \int_{X} p (s,x,s+h,v) p(s+h,v,s+h+a(t-s-h),y) \,dv\\
&= \limsup_{h\to 0^+} \sp(s,x,t-h+h/a,y) = \sp(s,x,t,y)\,. 
\end{align*}}
\end{proof}
Lemma~\ref{lem:QQ} applies to the Gaussian density, if $p=g_b$ and $\sp=g_a$, where $0<a<b$. Indeed, $g_b(s,x,t,y)=  g_a(as/b,x,at/b,y)$, see also \eqref{ineq:gbga}.

\subsection*{Acknowledgements}
\label{ackref}
 We thank Tomasz Jakubowski, Panki Kim,  George Papanicolaou and Lenya Ryzhik  for discussions and references. {Krzysztof Bogdan gratefully acknowledges the hospitality of the Departments of Mathematics and Statistics at Stanford University, where the paper was written in part.} We thank Tomasz Grzywny for helpful suggestions leading to \eqref{explKato} and Lemma~\ref{lem:2e}. \new{We thank the referees for helpful suggestions.}


\begin{thebibliography}{99}







\normalsize
\baselineskip=17pt


\bibitem{MR644024}
{M.~Aizenman \and B.~Simon},
\newblock \emph{Brownian motion and {H}arnack inequality for {S}chr\"odinger
  operators},
\newblock {Comm. Pure Appl. Math.} 35(2):209--273, 1982.

\bibitem{MR0435594}
{D.~G. Aronson},
\newblock \emph{Non-negative solutions of linear parabolic equations},
\newblock {Ann. Scuola Norm. Sup. Pisa (3)} 22:607--694, 1968.

\bibitem{MR2457489}
{K.~Bogdan, W.~Hansen \and T.~Jakubowski},
\newblock \emph{Time-dependent {S}chr\"odinger perturbations of transition densities},
\newblock {Studia Math.} 189(3):235--254, 2008.


\bibitem{2011-BHJ}
{K.~Bogdan, W.~Hansen \and T.~Jakubowski},
\newblock \emph{Localization and {Schr�dinger} perturbations of kernels},
\newblock {Potential Anal.} pages 1--16, 2012.
\newblock http://dx.doi.org/10.1007/s11118-012-9320-y.

\bibitem{MR2283957}
{K.~Bogdan \and T.~Jakubowski},
\newblock \emph{Estimates of heat kernel of fractional {L}aplacian perturbed by
  gradient operators},
\newblock {Comm. Math. Phys.} 271(1):179--198, 2007.

\bibitem{MR3000465}
{K.~Bogdan, T.~Jakubowski \and S.~Sydor},
\newblock \emph{Estimates of perturbation series for kernels},
\newblock {J. Evol. Equ.} 12(4):973--984, 2012.

\bibitem{MR2853532}
{S.~Cho, P.~Kim \and H.~Park},
\newblock \emph{Two-sided estimates on {D}irichlet heat kernels for time-dependent parabolic operators with singular drifts in {$C^{1,\alpha}$}-domains},
\newblock {J. Differential Equations} 252(2):1101--1145, 2012.

\bibitem{MR1329992}
{K.~L. Chung \and Z.~X. Zhao},
\newblock \emph{From {B}rownian motion to {S}chr\"odinger's equation}, volume
  312 of {Grundlehren der Mathematischen Wissenschaften [Fundamental
  Principles of Mathematical Sciences]}.
\newblock Springer-Verlag, Berlin, 1995.

\bibitem{MR936811}
{M.~Cranston, E.~Fabes \and Z.~Zhao},
\newblock \emph{Conditional gauge and potential theory for the {S}chr\"odinger
  operator},
\newblock {Trans. Amer. Math. Soc.} 307(1):171--194, 1988.

\bibitem{MR1883198}
{E.~B. Dynkin},
\newblock \emph{Diffusions, superdiffusions and partial differential equations},
  volume~50 of {American Mathematical Society Colloquium Publications}.
\newblock American Mathematical Society, Providence, RI, 2002.

\bibitem{MR2164260}
{J.~Dziuba{\'n}ski \and J.~Zienkiewicz},
\newblock \emph{Hardy spaces {$H^1$} for {S}chr\"odinger operators with compactly
  supported potentials},
\newblock {Ann. Mat. Pura Appl. (4)} 184(3):315--326, 2005.

\bibitem{MR0181836}
{A.~Friedman},
\newblock \emph{Partial differential equations of parabolic type}.
\newblock Prentice-Hall Inc., Englewood Cliffs, N.J., 1964.

\bibitem{MR2253111}
{A.~Gulisashvili \and J.~A. van Casteren},
\newblock \emph{Non-autonomous {K}ato classes and {F}eynman-{K}ac propagators}.
\newblock World Scientific Publishing Co. Pte. Ltd., Hackensack, NJ, 2006.

\bibitem{MR2207878}
{W.~Hansen},
\newblock \emph{Global comparison of perturbed {G}reen functions},
\newblock {Math. Ann.} 334(3):643--678, 2006.

\bibitem{MR2507445}
{T.~Jakubowski},
\newblock \emph{On combinatorics of {S}chr\"odinger perturbations},
\newblock {Potential Anal.} 31(1):45--55, 2009.

\bibitem{MR2875353}
{T.~Jakubowski},
\newblock \emph{Fractional {L}aplacian with singular drift},
\newblock {Studia Math.} 207(3):257--273, 2011.

\bibitem{MR2643799}
{T.~Jakubowski \and K.~Szczypkowski},
\newblock \emph{Time-dependent gradient perturbations of fractional {L}aplacian},
\newblock {J. Evol. Equ.} 10(2):319--339, 2010.

\bibitem{MR2876511}
{T.~Jakubowski \and K.~Szczypkowski},
\newblock \emph{Estimates of gradient perturbation series},
\newblock {J. Math. Anal. Appl.} 389(1):452--460, 2012.

\bibitem{MR1783642}
{V.~Liskevich \and Y.~Semenov},
\newblock \emph{Estimates for fundamental solutions of second-order parabolic
  equations},
\newblock {J. London Math. Soc. (2)} 62(2):521--543, 2000.

\bibitem{MR2253015}
{V.~Liskevich, H.~Vogt and J.~Voigt},
\newblock \emph{Gaussian bounds for propagators perturbed by potentials},
\newblock {J. Funct. Anal.} 238(1):245--277, 2006.

\bibitem{MR2314753}
{T.~J. Lyons, M.~Caruana \and T.~L{\'e}vy},
\newblock \emph{Differential equations driven by rough paths}, volume 1908 of
  {Lecture Notes in Mathematics}.
\newblock Springer, Berlin, 2007.
\newblock Lectures from the 34th Summer School on Probability Theory,
  Saint-Flour, July 6--24, 2004, with an introduction by J. Picard.

\bibitem{MR1482931}
{J.~R. Norris},
\newblock \emph{Long-time behaviour of heat flow: global estimates and exact
  asymptotics},
\newblock {Arch. Rational Mech. Anal.} 140(2):161--195, 1997.

\bibitem{MR2320609}
{L.~Riahi},
\newblock \emph{Dirichlet {G}reen functions for parabolic operators with singular
  lower-order terms},
\newblock {JIPAM. J. Inequal. Pure Appl. Math.} 8(2):Article 36, 24 pp.
  (electronic), 2007.

\bibitem{MR1687500}
{R.~Schnaubelt \and J.~Voigt},
\newblock \emph{The non-autonomous {K}ato class},
\newblock {Arch. Math. (Basel)} 72(6):454--460, 1999.

\bibitem{MR1457736}
{Q.~S. Zhang},
\newblock \emph{Gaussian bounds for the fundamental solutions of {$\nabla (A\nabla
  u)+B\nabla u-u_t=0$}},
\newblock {Manuscripta Math.} 93(3):381--390, 1997.

\bibitem{MR1488344}
{Q.~S. Zhang},
\newblock \emph{On a parabolic equation with a singular lower order term. {II}. {T}he
  {G}aussian bounds},
\newblock {Indiana Univ. Math. J.} 46(3):989--1020, 1997.

\bibitem{MR1978999}
{Q.~S. Zhang},
\newblock \emph{A sharp comparison result concerning {S}chr\"odinger heat kernels},
\newblock {Bull. London Math. Soc.} 35(4):461--472, 2003.

\bibitem{MR1132313}
{Z. Zhao},
\newblock \emph{A probabilistic principle and generalized {S}chr\"odinger perturbation},
\newblock {J. Funct. Anal.}, 101(1):162--176, 1991.
\end{thebibliography}
\end{document}